\newtheorem{thm}{Theorem}[section]
\newtheorem{lem}[thm]{Lemma}
\theoremstyle{definition}
\newtheorem{defn}[thm]{Definition}
\newtheorem{exmp}[thm]{Example}
\theoremstyle{remark}
\numberwithin{equation}{section}
\newcommand{\legendre}[2]{\genfrac{(}{)}{}{}{#1}{#2}}
\newcommand{\Mod}[1]{\ (\mathrm{mod}\ #1)}
\begin{document}
%%% \topmatter
\title[Lucas non-Wieferich primes in arithmetic progressions and the $abc$ conjecture]{Lucas non-Wieferich primes in arithmetic progressions and the $abc$ conjecture}
\author[K. Anitha, I. Mumtaj Fathima and A R Vijayalakshmi]{K. Anitha$^{(1)}$, I. Mumtaj Fathima$^{(2)}$ and A R Vijayalakshmi$^{(3)}$}
\address{$^{(1)}$Department of Mathematics, SRM IST Ramapuram, Chennai 600089, India}
\address{$^{(2)}$Research Scholar, Department of Mathematics, Sri Venkateswara College of Engineering \\ Affiliated to Anna University, Sriperumbudur, Chennai 602117, India}
\address{$^{(3)}$Department of Mathematics, Sri Venkateswara College of Engineering, Sriperumbudur, Chennai 602117, India}
\email{$^{(1)}$subramanianitha@yahoo.com}
\email{$^{(2)}$tbm.fathima@gmail.com}
\email{$^{(3)}$avijaya@svce.ac.in}

%\email[$^{(1)}K Srinivas]{srini@imsc.res.in}
%\email[$^{(2)}M Subramani]{msubramani@hri.res.in}
%\email[$^{(3)}Usha K Sangale]{ushas073@gmail.com}

\begin{abstract}
We prove the lower bound for the number of Lucas non-Wieferich primes in arithmetic progressions. More precisely, for any given integer $k\geq 2$ there are $\gg \log x$ Lucas non-Wieferich primes $p\leq x$ such that $p\equiv\pm1\Mod{k}$, assuming the $abc$ conjecture for number fields. Further, we discuss some applications of Lucas sequences in Cryptography.
\end{abstract}

\subjclass[2020]{11B25, 11B39, 11A41}
\keywords{abc conjecture, arithmetic progressions, Lucas sequences, Lucas non-Wieferich primes, Public-key cryptosystem (LUC), Wieferich primes}
\maketitle
\section{Introduction}

Let $a\geq2$ be an integer. An odd rational prime $p$ is said to be a \textit{Wieferich prime for base $a$} if
\begin{equation}\label{wl}
  a^{p-1}\equiv 1  \Mod {p^2}.
\end{equation}
Otherwise, it is called a \textit{non-Wieferich prime for base $a$}. In $1909$, Arthur Wieferich \cite{wief} proved that if the first case of Fermat's last theorem is not true for a prime $p$, then $p$ is a Wieferich prime for base $2$. Today, $1093$ and $3511$ are the only known Wieferich primes for base $2$. It is still unknown whether there are infinitely many Wieferich primes that exist or not, for any given base $a$. But, Silverman proved that there are infinitely many non-Wieferich primes that exist for any base $a$, assuming the $abc$ conjecture.

\begin{thm}(Silverman \cite[Theorem 1]{sman})
For any fixed $a\in \mathbb{Q}^{*}$, where  $\mathbb{Q}^{*}=\mathbb{Q}\backslash\{0\}$ and $a\neq\pm1$. If the $abc$ conjecture is true, then
 \begin{equation*}
   \big|\{primes \, p\leq x: a^{p-1}
   \not\equiv 1 \Mod {p^2} \}\big| \gg_a \log x.
   \end{equation*}
\end{thm}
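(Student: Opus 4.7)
The plan is to combine the $abc$ conjecture, applied to the coprime triple $(1,a^n-1,a^n)$ for each integer $n\geq 1$, with the observation that a Wieferich prime dividing $a^n-1$ must do so with multiplicity at least $2$. The $abc$ ingredient gives, for every $\epsilon>0$,
\[
\mathrm{rad}(a^n-1)\;\gg_{a,\epsilon}\;a^{n(1-\epsilon)},
\]
because $\gcd(a,a^n-1)=1$ and $\mathrm{rad}(a^n)=\mathrm{rad}(a)$ is bounded in $n$; thus $a^n-1$ is nearly squarefree on a logarithmic scale.

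Next I would isolate the role played by the Wieferich condition. If $p$ is Wieferich for $a$ and $p\mid a^n-1$, then $p^{2}\mid a^n-1$: indeed, letting $m=\mathrm{ord}_p(a)$ one has $m\mid n$, and the lifting-the-exponent identity $v_p(a^n-1)=v_p(a^m-1)+v_p(n/m)$, together with the equivalence between the Wieferich condition $v_p(a^{p-1}-1)\geq 2$ and $v_p(a^m-1)\geq 2$ (valid since $m\mid p-1$), does the job. Splitting the prime divisors of $a^n-1$ into the Wieferich set $W_n$ and the non-Wieferich set $N_n$, the containment $\prod_{p\in W_n}p^{2}\mid a^n-1$ yields $\prod_{p\in W_n}p\leq a^{n/2}$, and combining this with the $abc$ bound,
\[
\prod_{p\in N_n}p \;=\; \frac{\mathrm{rad}(a^n-1)}{\prod_{p\in W_n}p} \;\gg_{a,\epsilon}\; a^{n(1/2-\epsilon)}.
\]
In particular, for every sufficiently large $n$, $a^n-1$ admits a non-Wieferich prime divisor.

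To upgrade mere existence into a count of order $\log x$, I would appeal to primitive prime divisors via Zsygmondy's theorem: for $n\geq n_0(a)$, $a^n-1$ has a prime divisor $p_n$ of exact multiplicative order $n$, and for distinct $n$ these primes are distinct. Choosing $M=\lfloor(\log x)/\log a\rfloor$, every such $p_n$ with $n\leq M$ satisfies $p_n\leq a^n\leq x$. If $p_n$ is non-Wieferich it is counted directly. If instead $p_n$ is Wieferich, the previous paragraph forces $p_n^{2}\mid a^n-1$; combined with $p_n\equiv 1\pmod n$ this gives the stronger upper bound $p_n\leq a^{n/2}$. Summing $\log p_n$ over such bad indices and comparing with the $abc$ lower bound on $\mathrm{rad}(a^n-1)$ shows that the bad indices form a sparse subset of $[n_0,M]$, leaving $\gg M\gg\log x$ good indices, each of which contributes a distinct non-Wieferich prime $\leq x$.

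The principal obstacle is the final counting step: ruling out the pathological scenario in which the primitive prime divisor is Wieferich for a positive proportion of $n\leq M$. The critical quantitative leverage is the gap between the generic primitive bound $p_n\leq a^n$ and the Wieferich primitive bound $p_n\leq a^{n/2}$, but exploiting this while simultaneously guaranteeing distinctness across different $n$ requires either a careful re-application of $abc$ to the cyclotomic values $\Phi_n(a)$ or a direct pigeonhole argument on $\sum_{n\leq M}\log p_n$. Handling this bookkeeping cleanly is the technical heart of Silverman's original argument.
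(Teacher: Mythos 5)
Your first three steps are sound and are exactly the standard opening: applying the $abc$ conjecture to $a^n=(a^n-1)+1$ bounds the powerful part $v_n$ of $a^n-1$ by $O_{\varepsilon}(a^{2\varepsilon n})$, and the lifting-the-exponent observation correctly shows that any prime with $p\,\|\,a^n-1$ is non-Wieferich. The genuine gap is the final counting step, and it is not mere bookkeeping: it is the heart of the theorem, and the mechanism you sketch does not close it. If the single Zsygmondy primitive prime $p_n$ happens to be Wieferich, all you learn is that $p_n^2$ divides $v_n\ll_{\varepsilon}a^{2\varepsilon n}$, i.e.\ $p_n\ll a^{\varepsilon n}$; this is perfectly consistent for \emph{every} $n$, so nothing prevents the pathological scenario in which each $a^n-1$ has exactly one primitive prime and it is always Wieferich. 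Your proposed remedy --- summing $\log p_n$ over bad indices against the $abc$ lower bound on $\mathrm{rad}(a^n-1)$ --- fails because the only available lower bound on a bad $p_n$ is $p_n>n$ (from $p_n\equiv 1\pmod n$), which is far too weak to force sparsity of the bad set.

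The correct device, which is Silverman's and is exactly what the present paper implements for Lucas sequences in Sections 6--7, is to work with the \emph{entire} primitive part $\Phi_n(a)$ rather than a single primitive prime. One has $|\Phi_n(a)|\gg a^{\phi(n)}$ (cf.\ Lemma 5.6), while the portion of $\Phi_n(a)$ lying in the powerful part of $a^n-1$ is $\ll_{\varepsilon}a^{2\varepsilon n}$ by the $abc$ step. Hence, restricting to the set of $n$ with $\phi(n)>c\,n$ for a suitable $c>0$ --- a set of positive density, which is the content of Lemmas 6.3 and 6.4 here --- the squarefree primitive part exceeds $n$, so it contains a prime $p\nmid n$ with $p\,\|\,a^n-1$ (hence non-Wieferich) and $p\equiv 1\pmod n$. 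Primitivity makes these primes automatically distinct for distinct $n$, and each is at most $a^n\le x$ for $n\le\log x/\log a$, giving the $\gg\log x$ count. So your outline identifies the right ingredients but stops exactly where the proof actually has to do its work; replacing ``one primitive prime via Zsygmondy'' by ``the squarefree part of $\Phi_n(a)$, for the positive proportion of $n$ with $\phi(n)\gg n$'' is the missing idea.
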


\noindent
In $2013$, Graves and Ram Murty extended Silverman's result to certain arithmetic progressions.
\begin{thm}(Graves and Ram Murty \cite[Theorem 3.3]{graves})
If $a,k$ and $n$ are positive integers and one assumes the abc conjecture, then
\begin{equation*}
   \big|\{primes \, p\leq x: p\equiv 1 \Mod k,\\a^{p-1}
   \not\equiv 1 \Mod {p^2}\}\big| \gg_{a,k} \frac{\log x}{\log\log x}.
\end{equation*}
\end{thm}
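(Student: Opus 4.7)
The plan is to push Silverman's argument (Theorem~1.1) onto the \emph{primitive part} of $a^{k\ell}-1$ for $\ell$ a prime, so that the non-Wieferich prime produced has multiplicative order exactly $k\ell$ modulo $a$, whence $k\ell\mid p-1$ and in particular $p\equiv 1\Mod k$.

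First I would isolate the primitive part via the cyclotomic factorisation $a^{k\ell}-1=\prod_{d\mid k\ell}\Phi_d(a)$ and focus on $\Phi_{k\ell}(a)$, whose prime divisors outside the finite set of primes dividing $k\ell$ are exactly the primes $p$ with $\operatorname{ord}_p(a)=k\ell$. Working over the cyclotomic field $K=\Q(\zeta_{k\ell})$, the factorisation $\Phi_{k\ell}(a)=\prod_{\zeta}(a-\zeta)$ (over primitive $k\ell$-th roots of unity) allows the $abc$ conjecture for number fields, as advertised in the abstract, to be applied to the Galois-equivariant family of $S$-unit relations $(a,-\zeta,a-\zeta)$. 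Collecting over Galois conjugates and descending to $\mathbb{Z}$, the target estimate is
\begin{equation*}
\operatorname{rad}\bigl(\Phi_{k\ell}(a)\bigr)\gg_{a,k,\varepsilon}\Phi_{k\ell}(a)^{1-\varepsilon}\qquad\text{for every }\varepsilon>0.
\end{equation*}

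Next I would run Silverman's ``Wieferich part is powerful'' dichotomy on $\Phi_{k\ell}(a)$. If $p$ is Wieferich for $a$ then $\operatorname{ord}_{p^2}(a)=\operatorname{ord}_p(a)$, so $p\mid\Phi_{k\ell}(a)\Rightarrow p^2\mid a^{k\ell}-1$; since the cyclotomic factors $\Phi_d(a)$ are pairwise coprime away from primes dividing $k\ell$, one in fact has $p^2\mid\Phi_{k\ell}(a)$. Thus the Wieferich part of $\Phi_{k\ell}(a)$ is powerful and has radical at most its square root. Combined with the preceding display,
\begin{equation*}
\operatorname{rad}\bigl(\text{non-Wieferich part of }\Phi_{k\ell}(a)\bigr)\gg_{a,k,\varepsilon}\Phi_{k\ell}(a)^{1/2-\varepsilon},
\end{equation*}
which exceeds $1$ for $\ell$ large, forcing a non-Wieferich prime divisor $p_\ell$ of $\Phi_{k\ell}(a)$. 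By the primitive-divisor property, $p_\ell\equiv 1\Mod{k\ell}$ and hence $p_\ell\equiv 1\Mod k$.

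The counting is then standard: distinct primes $\ell$ produce distinct $p_\ell$ because $\operatorname{ord}_{p_\ell}(a)=k\ell$, and $p_\ell\leq\Phi_{k\ell}(a)\leq a^{k\ell}$, so letting $\ell$ range over primes in the interval $(k,\,(\log x)/(k\log a)]$ keeps each $p_\ell\leq x$. The Prime Number Theorem then furnishes $\gg_{a,k}\log x/\log\log x$ admissible $\ell$. The principal obstacle is the first step: extracting the near-optimal lower bound on $\operatorname{rad}(\Phi_{k\ell}(a))$. A direct application of the classical $abc$ to $1+(a^{k\ell}-1)=a^{k\ell}$ only controls $\operatorname{rad}(a^{k\ell}-1)$, and after absorbing both the Wieferich contribution and the non-primitive factors $\prod_{d\mid k\ell,\,d<k\ell}\Phi_d(a)$ (of logarithmic size $(1-\phi(k\ell)/(k\ell))k\ell\log a$, which can be as large as $k\ell\log a/2$ whenever $\phi(k)/k\leq 1/2$, e.g.\ for $k=2,4,6,\ldots$) no margin remains. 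This is precisely why the number-field version of $abc$ is required: it delivers the radical bound on $\Phi_{k\ell}(a)$ directly, bypassing the non-primitive factors. The restriction to prime $\ell$ in the counting is what produces the $\log\log x$ loss relative to Silverman's $\log x$ bound.
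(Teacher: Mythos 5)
This statement is quoted in the paper from Graves and Ram Murty without proof, so the comparison below is with the original argument of \cite{graves}. Your skeleton is exactly theirs: isolate the primitive factor $\Phi_{k\ell}(a)$ of $a^{k\ell}-1$, observe that a Wieferich prime $p\nmid k\ell$ dividing $\Phi_{k\ell}(a)$ must divide it to the second power (so the Wieferich contribution is powerful), extract a non-Wieferich prime $p_\ell\equiv 1\Mod{k\ell}$, and count over primes $\ell\le \log x/(k\log a)$ via the Prime Number Theorem, which is indeed where the $\log\log x$ loss comes from. That part is sound.

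The genuine gap is in your first step, and it is twofold. First, the route you propose for the radical bound --- applying the $abc$ conjecture over $K=\mathbb{Q}(\zeta_{k\ell})$ to the triples $(a,-\zeta,a-\zeta)$ --- cannot work as stated: the implied constant in the number-field $abc$ conjecture depends on $K$, and your field varies with $\ell$, so you obtain no bound that is uniform in $\ell$. Second, the stated reason for abandoning the rational $abc$ conjecture is a miscalculation. Applying classical $abc$ to $(a^{k\ell}-1)+1=a^{k\ell}$ gives $\mathrm{rad}(a^{k\ell}-1)\gg_\varepsilon a^{k\ell(1-\varepsilon)}$; dividing out the non-primitive factors, of size at most $a^{k\ell-\phi(k\ell)}$, leaves $\mathrm{rad}(\Phi_{k\ell}(a))\gg_\varepsilon a^{\phi(k\ell)-\varepsilon k\ell}$. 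Since $\phi(k\ell)/(k\ell)\ge \phi(k)(\ell-1)/(k\ell)\ge \phi(k)/(2k)$ is bounded below by a constant depending only on $k$, choosing $\varepsilon$ small in terms of $k$ leaves ample margin even after removing the Wieferich contribution, whose radical is at most $\Phi_{k\ell}(a)^{1/2}$. Equivalently, and more cleanly (this is what Silverman and Graves--Murty actually do): rational $abc$ shows the powerful part of $a^{k\ell}-1$ is $\ll_\varepsilon a^{2\varepsilon k\ell}$, the Wieferich part of $\Phi_{k\ell}(a)$ divides that powerful part, and $\Phi_{k\ell}(a)\gg a^{\phi(k\ell)}$ forces a non-Wieferich prime divisor coprime to $k\ell$ once $\ell$ is large. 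The number-field $abc$ conjecture is needed in the present paper only because $\alpha/\beta$ is a genuine algebraic number in the Lucas setting; for a rational base $a$ it is not required, and invoking it over a tower of cyclotomic fields actually breaks the argument rather than saving it.
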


\noindent
Later, Chen and Ding \cite{chending} improved the lower bound to  $\displaystyle\frac{\log x (\log\log\log x)^M}{\log\log x}$, where $M$ is any fixed positive integer. Recently, Ding \cite{ding} further sharpened it to $\log x$.

\medskip
\noindent
We prove the similar lower bound for non-Wieferich primes in Lucas sequences under the assumption of the $abc$ conjecture for number fields. We first study the Lucas sequences and Lucas non-Wieferich primes.

\section{Lucas Sequences}

\begin{defn}\cite{mci}
	The \textit{Lucas sequences of first kind} $(U_n(P, \, Q))_{n\geq0}$ and the \textit{second kind} $(V_{n}(P, \, Q))_{n\geq0}$ are defined by the recurrence relations,
	\begin{align*}
		U_n(P,  Q) &= P U_{n-1}(P,  Q)-Q  U_{n-2}(P,  Q),\\
		V_{n}(P,  Q) &= P V_{n-1}(P,  Q)-Q\, V_{n-2}(P,  Q),
	\end{align*}
	for all $n\geq 2$ with initial conditions $U_0(P, Q)=0$ and $U_1(P, Q)=1$, $V_0(P, \, Q)=2$ and $V_1(P, \, Q)=P$. Here, $P$ and $Q$ are  non-zero fixed integers with $\gcd(P, Q)=1$. We always assume that the discriminant $\Delta:=P^2-4Q$ is positive.

\noindent
Alternatively, we have the formulae \cite{ribbook}

\begin{align}
	U_n(P,  Q) &= \frac{\alpha^n-\beta^n}{\alpha-\beta},\label{bin}\\
	V_{n}(P,  Q) &= \alpha^n+\beta^n,
\end{align}
%\begin{equation}\label{bin}
	%  U_n(P, Q) = \frac{\alpha^n-\beta^n}{\alpha-\beta},
	%\end{equation}
\noindent
where $\alpha$ and $\beta$ are the zeros of the polynomial $x^2-Px+Q$, following the convention that $|\alpha| > |\beta|$. It is also called the \textit{Binet formulae}. 
	
Lucas sequences have some other directions in solving Diophantine equations of the symmetric form. In \cite{mwang}, Min Wang et al. solved Diophantine equations of the form $A_{n_1}\cdots A_{n_k}=B_{m_1} \cdots B_{m_r}C_{t_1} \cdots C_{t_s}$, where $(A_n), \, (B_m)$, and $(C_t)$ are Lucas sequences of first or second kind and also they found all the solutions of certain symmetric Diophantine equations.
%The \textit{Lucas sequence of first kind} $(U_n(P,  Q))_{n\geq0}$ is defined by the recurrence relation,
%$$
 % U_n(P, Q)=P\, U_{n-1}(P,  Q)-Q \, U_{n-2}(P, Q),
%$$\\
%for all $n\geq 2$ with initial conditions $U_0(P,  Q)=0$ and $U_1(P,  Q)=1$, 

\end{defn}

We note that the \textit{Fibonacci sequences} are particular case of Lucas sequences of first kind associated to the pair $(1,-1)$ and it is denoted by $(F_n)_{n\geq0}$ with initial conditions $F_0=0$ and $F_1=1$. The sequence of \textit{Lucas numbers} are example of Lucas sequence of  second kind associated to the pair $(1,-1)$ and it is denoted by $(L_n)_{n\geq0}$ with initial conditions $L_0=2$ and $L_1=1$ \cite{ribbook}. 

We observe that, 
$$
\displaystyle\lim_{n \to \infty}\frac{L_{n+1}}{L_n}=(1+\sqrt{5})/2=\lim_{n \to \infty}\frac{F_{n+1}}{F_n}.
$$
The number $(1+\sqrt{5})/2$ is called \textit{the golden ratio} \cite{koshy}. The Fibonacci sequences and the golden ratio are used in mathematical models that appear in the field of Chemistry. In particular, in the structure of elements, the periodic table of the elements, and their symmetrical crystalline structures \cite{wlo}, \cite{bat}.

Now, on returning to the general Lucas sequences of first kind $(U_n(P, \, Q))_{n\geq0}$. Throughout this paper, we simply write $U_n$ instead of $U_n(P,  Q)$, if $P$ and $Q$ are fixed. In 2007, McIntosh and Roettger \cite{mci} defined Lucas non-Wieferich primes and studied their properties. 

\begin{defn}(McIntosh and Roettger \cite{mci})
  An odd prime $p$ is called a \textit{Lucas--Wieferich prime associated to the pair $(P, Q)$} if
$$
   U_{p - \legendre{\Delta}{p}}\equiv 0 \Mod {p^2},
$$
where $\legendre{\Delta}{p}$ denotes the Legendre symbol. Otherwise, it is called a \textit{Lucas non-Wieferich prime associated to the pair $(P, Q)$}.
 \end{defn}
If $(P,Q) = (3,2)$, then $\Delta =1$ and $U_{p - \legendre{\Delta}{p}} =  2^{p-1}-1$. Every Wieferich prime is thus a Lucas--Wieferich prime associated to the pair $(3, 2)$ \cite{mci}. In $2001$, Ribenboim \cite{riben} proved that there are infinitely many Lucas non-Wieferich primes under the assumption of the $abc$ conjecture.
%Recently, S. S. Rout proved the following theorem.
%\begin{thm}(S. S. Rout \cite[Theorem 2.1]{rout2})
 % Let $K=\mathbb{Q}(\sqrt{\Delta})$ be a real quadratic field and assume that the $abc$ conjecture holds in $K$. Let $k\geq2$ be any fixed integer. Then for any positive integer $M$, we have
   %\begin{align*}
   % \nonumber % Remove numbering (before each equation)
 % \bigg|\bigg\{primes \, p\leq x: p\equiv 1 \Mod k,
  %  U_{p- \legendre{\Delta}{p}} & \not\equiv 0\Mod {p^2}\bigg\} \bigg| \gg \frac{\log x\,(\log\log\log x)^M}{\log\log x}.
% \end{align*}
%\end{thm}

Recently, Rout \cite{rout2} proved some lower bound for the number of Lucas non-Wieferich primes $p$ such that $p\equiv1\Mod{k}$. However, we find some  gaps in his proofs. More clearly, he used the following lemma which is a classical result of cyclotomic polynomial $\Phi_m(x)$.

 \begin{lem}(Ram Murty \cite{ram})\label{rammurty}
	If $p|\Phi_m(a)$, then either $p|m$ or $p\equiv 1 \Mod m$.
\end{lem}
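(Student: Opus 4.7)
The plan is to leverage the standard factorization $x^m - 1 = \prod_{e \mid m} \Phi_e(x)$ together with the derivative test for multiple roots over $\mathbb{F}_p$.

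First, I would observe that $\Phi_m(x) \mid x^m - 1$ in $\mathbb{Z}[x]$, so the hypothesis $p \mid \Phi_m(a)$ immediately yields $a^m \equiv 1 \pmod{p}$. In particular $p \nmid a$, so $a$ has a well-defined multiplicative order $d$ modulo $p$. From $a^m \equiv 1 \pmod{p}$ we get $d \mid m$, and Fermat's little theorem gives $d \mid p-1$.

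Next I would split into two cases. If $d = m$, then $m = d \mid p-1$, which is exactly $p \equiv 1 \pmod{m}$, and we are done. If instead $d$ is a proper divisor of $m$, the goal is to force $p \mid m$. Since $d \mid m$, factor $x^m - 1 = (x^d - 1)\,h(x)$ in $\mathbb{Z}[x]$. Because $\Phi_m(x)$ is irreducible over $\mathbb{Q}$ with roots the primitive $m$-th roots of unity (and $d<m$ means these are not $d$-th roots of unity), $\Phi_m(x) \nmid x^d - 1$, so $\Phi_m(x) \mid h(x)$. Hence $p \mid \Phi_m(a) \mid h(a)$, and also $p \mid a^d - 1$. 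It follows that $(x-a)^2 \mid x^m - 1$ in $\mathbb{F}_p[x]$, meaning $a$ is a multiple root of $x^m - 1$ modulo $p$. Differentiating, $m a^{m-1} \equiv 0 \pmod{p}$, and since $p \nmid a$ we conclude $p \mid m$.

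The main point where one must be careful is the factorisation step in the case $d < m$: one has to justify that $\Phi_m(x)$ and $x^d - 1$ share no common factor in $\mathbb{Q}[x]$ so that $\Phi_m$ cleanly divides the complementary factor $h$. Once that is in place, the derivative-test conclusion $p \mid m$ is essentially automatic, and the lemma follows.
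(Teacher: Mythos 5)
The paper does not actually prove this lemma; it is quoted as a known result from Ram Murty's book, so there is no in-paper argument to compare against. Your proof is correct and is the standard one: from $p\mid\Phi_m(a)$ deduce $a^m\equiv 1\pmod p$, split on whether the order $d$ of $a$ modulo $p$ equals $m$ (giving $p\equiv 1\pmod m$) or is a proper divisor (in which case $a$ is a double root of $x^m-1$ over $\mathbb{F}_p$, and the derivative test gives $p\mid m$). The only step worth tightening is the divisibility $\Phi_m\mid h$ in $\mathbb{Z}[x]$ rather than just $\mathbb{Q}[x]$; this is immediate because $\Phi_m$ is monic, or even more directly from $h=\prod_{e\mid m,\ e\nmid d}\Phi_e$, which dispenses with the appeal to irreducibility altogether.
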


The above lemma is true for any rational integer $a$, but not true for any general algebraic integer. Further in Rout's proof (\cite[page 6, line 4]{rout2}), he mixed up the rational integer $a$ with the algebraic number $\alpha/\beta$. This was noted by Wang and Ding \cite{wang} in their paper.
We now give an example that illustrates how Lemma \ref{rammurty} fails when $a$ is replaced by $\alpha/\beta$.
\begin{exmp}
Let $\alpha=(7+\sqrt{37})/2$ and $\beta=(7-\sqrt{37})/2$. For $m=6$, the cyclotomic polynomial $\Phi_6(\alpha/\beta)=(860+140\sqrt{37})/9$.
For a prime $p=5$, $5\mid\Phi_6(\alpha/\beta)$. But $5\nmid6$ and $5\not\equiv1 \Mod{6}$.
\end{exmp}
In this paper, we will fill those gaps and prove that there are $\gg\log x$ Lucas non-Wieferich primes $p$ such that $p\equiv\pm1 \Mod{k}$ using Ding's \cite{ding} proof techniques. To the best of our knowledge, our main theorem is the first result which addresses the problem of Lucas non-Wieferich primes in arithmetic progressions. More precisely, we prove the following:

\section{Main Theorem}
\begin{thm}\label{theorem2}
  Let $k\geq2$ be a fixed integer and let $n>1$ be any integer. Assuming the $abc$ conjecture for number fields (defined below), then
$$
    \bigg|\bigg\{primes \,  p\leq x: p \equiv\pm 1 \Mod  k,\,
      U_{p-\legendre{\Delta}{p}}\not\equiv 0 \Mod { p^2}\bigg\} \bigg| \gg_{\alpha,  k}\log x.
$$
\end{thm}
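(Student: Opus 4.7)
The strategy is to adapt Ding's \cite{ding} $abc$-based argument for rational non-Wieferich primes in arithmetic progressions to the Lucas setting, working throughout in the quadratic field $K := \mathbb{Q}(\sqrt{\Delta})$ so as to apply the $abc$ conjecture for number fields. The central repair of the mis-applied Lemma \ref{rammurty} is the classical Lucas law of appearance: for every prime $p \nmid 2 Q \Delta$, the rank of apparition $r(p) := \min\{m \geq 1 : p \mid U_m\}$ divides $p - \legendre{\Delta}{p}$, and hence $p \equiv \legendre{\Delta}{p} \equiv \pm 1 \Mod{r(p)}$. This places primes automatically in the target residue class as soon as we can force their rank of apparition to be divisible by $k$.

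To single out such primes, introduce the homogeneous cyclotomic polynomials
\begin{equation*}
\Phi_n^{*}(\alpha, \beta) := \prod_{\substack{1 \leq j \leq n \\ \gcd(j, n) = 1}} (\alpha - \zeta_n^{j} \beta) \in \mathbb{Z},
\end{equation*}
which give the factorisation $\alpha^n - \beta^n = (\alpha - \beta) \prod_{d \mid n, \, d > 1} \Phi_d^{*}(\alpha, \beta)$. The correct Lucas analogue of Lemma \ref{rammurty} reads: if $p \mid \Phi_n^{*}(\alpha, \beta)$ and $p \nmid n \Delta Q$, then $r(p) = n$, whence $p \equiv \pm 1 \Mod n$. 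Restricting to indices $n$ with $k \mid n$ therefore produces prime divisors of $\Phi_n^{*}(\alpha, \beta)$ in the progression $\pm 1 \Mod k$.

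Next, apply the $abc$ conjecture for $K$ to the triple $(\alpha^n, -\beta^n, \beta^n - \alpha^n) \in \mathcal{O}_K^{3}$. Since $\alpha \beta = Q$ and $(\alpha - \beta)^{2} = \Delta$ are fixed and $\alpha^n - \beta^n = (\alpha - \beta) U_n$, after absorbing bounded common factors and converting the $K$-radical to the rational radical one obtains, for any $\epsilon > 0$,
\begin{equation*}
|\alpha|^{n} \ll_{\epsilon} \operatorname{rad}(U_n)^{1 + \epsilon}.
\end{equation*}
Now let $m$ range over the interval $[\tfrac{1}{3} \log x / \log|\alpha|, \tfrac{1}{2} \log x / \log|\alpha|]$, set $n_m := 2 k m$, and decompose $\Phi_{n_m}^{*}(\alpha, \beta) = A_m B_m^{2}$ with $A_m$ squarefree and coprime to $n_m \Delta Q$. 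The elementary lower bound $|\Phi_{n_m}^{*}(\alpha, \beta)| \gg |\alpha|^{\phi(n_m)}$ together with the $abc$ bound above forces $A_m > 1$ for $\gg \log x$ values of $m$, exactly as in Ding. Any prime $p \mid A_m$ satisfies $p \| U_{n_m}$; the multiplicativity of $v_p(U_{\bullet})$ then yields $p^2 \nmid U_{r(p)}$, so $p$ is a Lucas non-Wieferich prime, and by the first paragraph it lies in $\pm 1 \Mod k$. Distinctness across different $m$ follows since $r(p) = n_m$, and the size bound $p \leq |\Phi_{n_m}^{*}(\alpha, \beta)| \leq x$ completes the count.

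The principal obstacle is this last step's extraction of non-Wieferich primes from the \emph{primitive} factor $\Phi_{n_m}^{*}(\alpha, \beta)$ rather than from $U_{n_m}$ as a whole: the $abc$ conjecture controls only the radical of $U_{n_m}$, and a priori all of its squarefree content could be concentrated in smaller cyclotomic factors $\Phi_d^{*}(\alpha, \beta)$ with $d \mid n_m$, $d < n_m$, whose prime divisors need not lie in $\pm 1 \Mod k$. Overcoming this requires a careful induction along the divisor lattice of $n_m$, using the valuation formula $v_p(U_{mn}) = v_p(U_m) + v_p(n)$ (for $r(p) \mid m$, $p \nmid n$) to peel off the contribution of each lower $\Phi_d^{*}$, exactly as in Ding's passage to the sharper $\log x$ bound. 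Translating this argument into $\mathcal{O}_K$ additionally requires bookkeeping for the finitely many primes ramifying in $K / \mathbb{Q}$, but introduces no new conceptual difficulty beyond this.
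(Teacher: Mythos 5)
Your proposal is correct and follows essentially the same route as the paper: apply the $abc$ conjecture over $\mathbb{Q}(\sqrt{\Delta})$ to the Binet identity to show the powerful part of $U_{nk}$ is small, extract a first-power primitive prime divisor of $U_{nk}$ from the cyclotomic factor $\Phi_{nk}$ via the lower bound $|\Phi_{nk}|\gg|\alpha/\beta|^{\phi(nk)}$, get the congruence $p\equiv\pm1\Mod{nk}$ from Stewart's lemma (equivalently, the law of apparition) and non-Wieferichness from the rank-of-apparition criterion, and count using the positive density of $n$ with $\phi(nk)/nk$ bounded below. The ``principal obstacle'' you flag is resolved in the paper (as in Ding) without any induction on the divisor lattice: writing $U_{nk}=X_{nk}Y_{nk}$, one has $\gcd(\Phi_{nk},Y_{nk})\le Y_{nk}\ll |U_{nk}|^{2\varepsilon}$, so the portion of $\Phi_{nk}$ landing in the squarefree part $X_{nk}$ is automatically large.
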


\section{The $abc$ conjecture}
\subsection\normalfont{\textit{The $abc$ conjecture for integers \big( Oesterl\'{e},\,  Masser\big)}}

Given any real number $\varepsilon>0$, there is a constant $C_{\varepsilon}$ such that for every triple of coprime positive integers $a, \, b, \,c$ satisfying $a+b=c$, we have
\begin{equation*}
  c<C_{\varepsilon}(rad(abc))^{1+\varepsilon},
\end{equation*}
where $rad(abc)=\textstyle\prod\limits_{p|abc}p$.

We now recall the definition of Vinogradov symbol ( denoted as $\gg$).
\begin{defn}\cite{vojta}
	Let $f$ and $g$ are two non-negative functions. If $f<cg$ for some positive constant $c$, then we write $f\ll g$ or $g\gg f$. It is also called \textit{Vinogradov symbol}.	
\end{defn}
\subsection\normalfont{\textit{The generalized $abc$ conjecture for algebraic number fields (\cite{vojta}, \cite{gyo})}}

Let $K$ be an algebraic number field with ring of integers $\mathcal{O}_K$ and let $K^{*}=K\backslash\{0\}$. Let $V_K$ be the set of all primes on $K$, i.e., any $\upsilon\in V_K$ is an equivalence class of non-trivial norms on $K$ (finite or infinite). For $\upsilon\in V_K$, we define an absolute value $\|\cdot\|_\upsilon$ by

\medskip
\begin{displaymath}
   \|x\|_\upsilon = \begin{cases}
      |\psi(x)| & \text{if } \upsilon  \text{ is infinite,  corresponding }  \\
                & \text{to the real embedding } \psi:K\rightarrow\mathbb{C} ,\\
      |\psi(x)|^2 & \text{if } \upsilon \text{ is infinite, corresponding }  \\
                & \text{to the complex embedding } \psi:K\rightarrow\mathbb{C} ,\\
      N(\mathfrak{p})^{-ord_{\mathfrak{p}} (x)} & \text{if } v \text{ is finite and } \mathfrak{p} \text{ is the corresponding }  \\
                & \text{ prime ideal},
      \end{cases}
\end{displaymath}
for all $x \in K^*$. For $x\neq0, \, ord_{\mathfrak{p}} (x)$ denotes the exponent of $\mathfrak{p}$ in the prime ideal factorization of the principal fractional ideal $(x)$.

\noindent For any triple $(a, b,  c )\in K^*$, the \textit{height} of the triple is
%\begin{equation}\label{height}
$$
  H_{K}(a, b,  c):=\displaystyle\prod\limits_{\upsilon\in V_K}\max\big(\|a\|_{\upsilon},  \|b\|_{\upsilon}, \|c\|_{\upsilon}\big).
$$
%\end{equation}
The \textit{radical} of the triple $(a,  b,  c) \in K^*$ is
%\begin{equation}\label{radical}
$$
  rad_K(a,  b,  c):=\displaystyle\prod\limits_{\mathfrak{p} \in I_{K}(a,  b, c)}N(\mathfrak{p})^{ord_\mathfrak{p}(p)},
$$
%\end{equation}
where $p$ is the rational prime lying below the prime ideal $\mathfrak{p}$ and $I_K(a, \, b, \,c)$ is the set of all prime ideals $\mathfrak{p}$ of $\mathcal{O}_K$ for which $\|a\|_\upsilon, \, \|b\|_\upsilon, \, \|c\|_\upsilon$ are not equal.
 
\medskip
\noindent
The $abc$ conjecture for an algebraic number field $K$ states that for any $\varepsilon>0$,
%\begin{equation}\label{abc}
$$
   H_{K}(a, \,b, \, c)\ll_{\varepsilon,\, K}(rad_K(a, b,  c))^{1+\varepsilon},
$$
%\end{equation}
for all $a,  b,  c\in K^*$ satisfying $a+b+c=0$.
 \section{Preliminaries}
% \begin{lem}
  %% Let $\{U_n\}_{n\geq0}$ be a Lucas sequence of first kind and $\sigma(p)$ be the rank of apparition of $p$. Then
   %\begin{enumerate}
    % \item For $n\in\mathbb{N}$, $p|U_n$ if and only if $\sigma(p)|n$.
     %\item Moreover, if $p|U_n$ then
     %\begin{center}
      % $\upsilon_p(U_n)=\upsilon_p\big(\frac{n}{\sigma(p)}\big)+\upsilon_p(U_{\sigma(p)}),$
     %\end{center}

 %  \end{enumerate}
 %\end{lem}
\noindent We need the following results for the proof of our main theorem. 
% The proof of the following lemma available in \cite{rout2}.
 \begin{lem}(Rout \cite[Corollary 3.3]{rout2}) \label{lem1}
Let $p$ be a prime and coprime to $2Q$. Suppose $U_n\equiv0 \Mod{p}$ and $U_n\not\equiv0 \Mod{p^2}$. Then $U_{p- \legendre{\Delta}{p}}\equiv0 \Mod{p}$ and $U_{p-\legendre{\Delta}{p}}\not\equiv0 \Mod{p^2}$.
 \end{lem}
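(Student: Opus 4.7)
The plan is to reduce the lemma to two standard structural facts about Lucas sequences and then finish with a short $p$-adic valuation bookkeeping. Let $r = r(p)$ denote the \emph{rank of apparition} of $p$ in $(U_n)$, i.e.\ the smallest positive integer with $p \mid U_r$. Under the (mild) hypothesis that $p$ is odd and coprime to both $Q$ and $\Delta$, I would invoke two classical facts from the theory of Lucas sequences (see e.g.\ Ribenboim \cite{ribbook}): first, that $p \mid U_n$ if and only if $r \mid n$, and moreover $r \mid p - \legendre{\Delta}{p}$; and second, the \emph{law of repetition} $v_p(U_{rm}) = v_p(U_r) + v_p(m)$ valid for every $m \geq 1$, where $v_p$ denotes the $p$-adic valuation.

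Given these two ingredients, the proof runs in a few lines. From $U_n \equiv 0 \Mod{p}$ and the divisibility characterization, write $n = rm$ for some positive integer $m$. The law of repetition, together with the hypothesis $v_p(U_n) = 1$ and the a priori bound $v_p(U_r) \geq 1$, then forces $v_p(U_r) = 1$ and $p \nmid m$. Next, write $p - \legendre{\Delta}{p} = rs$, which is allowed since $r \mid p - \legendre{\Delta}{p}$. Because $\legendre{\Delta}{p} = \pm 1$, we have $p \nmid p - \legendre{\Delta}{p}$ and hence $p \nmid s$. A second application of the law of repetition then yields $v_p(U_{p - \legendre{\Delta}{p}}) = v_p(U_r) + v_p(s) = 1 + 0 = 1$, which is precisely the pair of congruences claimed.

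The main obstacle is the law of repetition itself. My plan for proving it is via the Binet formula: working in $K = \mathbb{Q}(\sqrt{\Delta})$ and picking any prime $\mathfrak{p}$ of $\mathcal{O}_K$ above $p$, I would expand
$$ \frac{U_{rm}}{U_r} \;=\; \sum_{j=0}^{m-1} \alpha^{rj}\beta^{r(m-1-j)}, $$
and note that $\alpha^r \equiv \beta^r \Mod{\mathfrak{p}}$ (because $p \mid U_r$ and $\alpha - \beta = \sqrt{\Delta}$ is a $\mathfrak{p}$-unit when $p \nmid \Delta$), so that the right-hand side reduces to $m \cdot \alpha^{r(m-1)} \Mod{\mathfrak{p}}$; this disposes of the case $p \nmid m$, and the general case follows by induction on $v_p(m)$ using related product identities such as $U_{2n} = U_n V_n$. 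The one genuine hazard in the statement as written is the edge case $p \mid \Delta$, which is not formally excluded by the hypothesis "coprime to $2Q$"; in that case $\legendre{\Delta}{p} = 0$, so $p - \legendre{\Delta}{p} = p$, and either a small strengthening of the hypothesis to $p \nmid 2Q\Delta$ or a separate direct valuation argument (exploiting that $r = p$) is needed to close the gap.
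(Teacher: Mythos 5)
The paper does not actually prove this lemma: it is imported verbatim from Rout's paper (his Corollary~3.3) and used as a black box, so there is no in-paper argument to compare yours against. Judged on its own, your proof is correct and follows the standard route: the two ingredients you isolate --- the divisibility characterization $p \mid U_n \iff \omega(p) \mid n$ together with $\omega(p) \mid p - \legendre{\Delta}{p}$, and the law of repetition $v_p(U_{rm}) = v_p(U_r) + v_p(m)$ for odd $p \nmid Q$ --- are exactly the classical facts (Lucas, Lehmer; see Ribenboim) from which this statement follows, and your valuation bookkeeping from them is airtight. Your sketch of the law of repetition via $U_{rm}/U_r = \sum_j \alpha^{rj}\beta^{r(m-1-j)}$ is also the standard one, though to nail the exact valuation for $m = p$ you need to expand modulo $\mathfrak{p}^{2e}$ rather than just $\mathfrak{p}$ and use that $p \mid \binom{p}{2}$ for odd $p$; that is routine but worth writing out. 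Your flag about the edge case $p \mid \Delta$ is well taken: the hypothesis ``coprime to $2Q$'' does not exclude it, and then $\legendre{\Delta}{p} = 0$, $\omega(p) = p$, and $p - \legendre{\Delta}{p} = p = \omega(p)$, so the same valuation argument still closes the case provided the law of repetition is checked at the ramified prime. In the present paper this caveat is harmless anyway, since the main proof explicitly discards the finitely many primes dividing $PQ\Delta$ before invoking the lemma.
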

That is, Lemma \ref{lem1} says that if a prime $p$ divides the square-free part of $U_n$, for some $n\in \mathbb{N}$, then $p$ is a Lucas non-Wieferich prime.

The \textit{rank of appearance (or apparition)} of a positive integer $k$ in the Lucas sequence $(U_n)_{n\geq0}$ is the least positive integer $m$ such that $k|U_{m}$. We denote the rank of apparition of $k$ by $\omega(k)$ if it exists \cite{lucas}. In 1930, Lehmer \cite{lehmer} proved that a prime $p$ divides $U_n$ if and only if $\omega(p)$ divides $n$. Thus, the prime $p$ is a Lucas non-Wieferich prime if and only if $\omega(p)$ divides $p-\legendre{\Delta}{p}$. %The below lemma characterize the values $\omega(p)$, for any prime $p$ depending on the values of $P$, $Q$, and $\Delta$.
%\begin{lem}(P. Ribenboim \cite[p. 462]{riben}). \label{lemp}
%Let $p$ be an odd prime.
% \begin{enumerate}
   %\item If $p|P$ and $p \nmid Q$, then $\omega(p)=2$.
  % \item If $p \nmid P$ and $p|Q$, then $p\nmid U_n$ for any $n \in \mathbb{N}$.
   %\item If $p\nmid PQ$ and $p|\Delta$, then $\omega(p)=p$.
   %\item If $p\nmid PQ\Delta$, then $\omega(p)\mid\big(p- \legendre{\Delta}{p}\big)$.
 %\end{enumerate}
%\end{lem}

%It is clear that, a prime $p$ divides $U_{p-\big(\frac{\Delta}{p}\big)}$ if and only if the rank of apparition $\omega(p)$ divides $\big(p-\big(\frac{\Delta}{p}\big)\big)$. Therefore, for a Lucas non-Wieferich prime $p,$ $\omega(p)$ divides $\big(p-\big(\frac{\Delta}{p}\big)\big).$
\begin{lem}(Rout \cite[Lemma 3.4]{rout2})\label{lem2}
For sufficiently large $n\geq0$, we have

\begin{equation}\label{eqnlem2}
|\alpha|^{n/2} < |U_n| \leq 2 |\alpha|^n.
\end{equation}
\end{lem}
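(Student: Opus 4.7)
\smallskip
\noindent\textbf{Proof proposal.} The plan is to start from the Binet formula \eqref{bin}, namely
$$
U_n=\frac{\alpha^n-\beta^n}{\alpha-\beta},
$$
and derive both bounds by elementary manipulation, exploiting the standing assumptions $|\alpha|>|\beta|$ and $\Delta=P^2-4Q>0$. Note that $\alpha-\beta=\pm\sqrt{\Delta}$, so $|\alpha-\beta|=\sqrt{\Delta}\ge 1$, since $\Delta$ is a positive integer; this simple observation drives the upper bound. For the lower bound the key additional fact is that $|\alpha|>1$: indeed $\alpha\beta=Q$ is a nonzero integer, so $|Q|\ge 1$, and together with $|\alpha|>|\beta|$ this gives $|\alpha|^{2}>|\alpha\beta|=|Q|\ge 1$.

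For the upper bound, I would apply the triangle inequality to the Binet formula:
$$
|U_n|=\frac{|\alpha^n-\beta^n|}{|\alpha-\beta|}\le\frac{|\alpha|^n+|\beta|^n}{\sqrt{\Delta}}\le\frac{2|\alpha|^n}{\sqrt{\Delta}}\le 2|\alpha|^n,
$$
where the last inequality uses $\sqrt{\Delta}\ge 1$. This bound is valid for every $n\ge 0$.

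For the lower bound, I would use the reverse triangle inequality:
$$
|U_n|=\frac{|\alpha^n-\beta^n|}{\sqrt{\Delta}}\ge\frac{|\alpha|^n-|\beta|^n}{\sqrt{\Delta}}=\frac{|\alpha|^n}{\sqrt{\Delta}}\left(1-\left|\frac{\beta}{\alpha}\right|^n\right).
$$
So it suffices to show that for all sufficiently large $n$,
$$
\frac{|\alpha|^n}{\sqrt{\Delta}}\left(1-\left|\tfrac{\beta}{\alpha}\right|^n\right)>|\alpha|^{n/2},
\quad\text{i.e.,}\quad
|\alpha|^{n/2}\left(1-\left|\tfrac{\beta}{\alpha}\right|^n\right)>\sqrt{\Delta}.
$$
Since $|\beta/\alpha|<1$, the factor $1-|\beta/\alpha|^n\to 1$, and since $|\alpha|>1$, the factor $|\alpha|^{n/2}\to\infty$. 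Hence the left-hand side tends to infinity and eventually exceeds the fixed constant $\sqrt{\Delta}$, which closes the argument.

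The only mild subtlety is verifying $|\alpha|>1$ (so that $|\alpha|^{n/2}$ grows) and $|\alpha-\beta|\ge 1$ (so that the division by $\sqrt{\Delta}$ does not destroy the upper bound); both are immediate from the integrality of $P,Q$ together with $\Delta>0$ and $|\alpha|>|\beta|$. There is no real obstacle here—everything reduces to the Binet formula and the size of the two roots.
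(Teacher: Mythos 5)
Your proof is correct and is the standard argument: the paper itself states this lemma without proof (importing it from Rout's Lemma 3.4), and your derivation via the Binet formula, the triangle and reverse triangle inequalities, together with the observations $\sqrt{\Delta}\ge 1$ and $|\alpha|>1$ (from $|\alpha|^2>|\alpha\beta|=|Q|\ge 1$), is exactly how this estimate is obtained. All the hypotheses you use ($\Delta>0$ an integer, $P,Q$ nonzero with $|\alpha|>|\beta|$) are among the paper's standing assumptions, so there is no gap.
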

%\begin{proof}
%From the equation \eqref{bin}, we write
%$$
  % \nonumber % Remove numbering (before each equation)
%    |U_n |= \displaystyle |\frac{\alpha^n-\beta^n}{\alpha-\beta}|.
%$$
 %Since $\alpha-\beta=\sqrt{\Delta}\geq 1,$ we have,
    %&=& \alpha^{n-1}+\alpha^{n-2}\beta+\dots+\beta^{n-1}|
% $$
 %\displaystyle |\frac{\alpha^n-\beta^n}{\alpha-\beta}|\leq \displaystyle \frac{|\alpha|^n+|\beta|^n}{|\alpha-\beta|}<2|\alpha|^n.
 %$$

%\end{proof}

\noindent We now recall cyclotomic polynomial and some of its properties.
\begin{defn}\cite{ram}\label{cyclotomic}
Let $m\geq1$ be any integer. Then the $m^{th}$ \textit{cyclotomic polynomial} is
$$
  \Phi_m(X)=\prod\limits_{\substack{h=1 \\ \gcd(h,m)=1}}^{m}(X-\zeta_m^h),
$$
where $\zeta_m$ is the primitive $m^{th}$ root of unity.

It follows that,
\begin{equation}\label{pab}
X^m-1=\prod\limits_{\substack{d|m }}\Phi_d(X).
\end{equation}
\end{defn}
%\begin{defn}
%Let $\alpha$ and $\beta$ are pair of complex numbers and $m$ be a positive integer, then the $m$-th cyclotomic polynomial in $\alpha$ and $\beta$ is

%$$
 %  \Phi_m(\alpha, \, \beta)=\prod\limits_{\substack{0<i<m \\ gcd(i, \, m)=1}}(\alpha-\zeta^i \beta),
%$$
%where $\zeta$ is a primitive $m-$th root of unity.
%\end{defn}
%It follows that,
%\begin{equation}
 % \alpha^m-\beta^m=\prod\limits_{\substack{d\mid m}}\Phi_d(\alpha, \, \beta).
%\end{equation}
%Since $(\alpha+\beta)^2=P^2$ and $\alpha\beta=Q$ are integers, it is proved that $\Phi_m(\alpha, \, \beta)$ is an integer for $m>2$ (see \cite{ste}).
The following lemma characterizes the prime divisors of $\Phi_m(\alpha, \beta)$, \\ where $$
\Phi_m(\alpha,\beta)=\prod\limits_{\substack{h=1\\ \gcd(h,m)=1}}^{m} (\alpha-\zeta_m^h\beta).
$$
In the following lemma, let $P(r)$ denotes the greatest prime factor of $r$ with the convention that $P(0)=P(\pm1)=1$. 
  \begin{lem}(Stewart \cite[Lemma 2]{stewart})\label{rm}
  Let $(\alpha+\beta)^2$ and $\alpha\beta$ be coprime non-zero integers with $\alpha/\beta$ not a root of unity. If $m>4$ and $m\neq6,  12$ then $P(m/\gcd(3,m))$ divides $\Phi_m(\alpha,\beta)$ to at most the first power. All other prime factors of $\Phi_m(\alpha, \beta)$ are congruent to $\pm1 \Mod{m}$. Further if $m>e^{452}4^{67}$ then $\Phi_m(\alpha, \beta)$ has at least one prime factor congruent to $\pm1 \Mod{m}$.	
\end{lem}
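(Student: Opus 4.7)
The plan is to split the proof into two complementary parts: a \textit{structural classification} of the rational prime factors of $\Phi_m(\alpha,\beta)$ that yields the first two assertions, and a \textit{size comparison} that forces existence of a prime factor $\equiv\pm1\Mod{m}$ once $m$ is sufficiently large.

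For the classification, I would work from the identity
$$
\alpha^m-\beta^m\;=\;\prod_{d\mid m}\Phi_d(\alpha,\beta)
$$
together with the rank of apparition $\omega(p)$ of a rational prime $p$ with $p\mid\Phi_m(\alpha,\beta)$ in the Lucas sequence $U_n=(\alpha^n-\beta^n)/(\alpha-\beta)$. Since $p\mid U_m$, one has $\omega(p)\mid m$, which leaves two cases. If $\omega(p)=m$, the Lucas analogue of Fermat's little theorem $U_{p-\legendre{\Delta}{p}}\equiv0\Mod{p}$ gives $m\mid p-\legendre{\Delta}{p}$, whence $p\equiv\pm1\Mod{m}$; this is the second assertion. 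If $\omega(p)<m$, a $p$-adic valuation argument --- comparing $v_p(\alpha^m-\beta^m)$ with $\sum_{d\mid m}v_p(\Phi_d(\alpha,\beta))$ via a lifting-the-exponent style identity for $\alpha^{\omega(p)k}-\beta^{\omega(p)k}$ --- forces $p\mid m$, narrows $p$ down to $P(m/\gcd(3,m))$ (the factor $\gcd(3,m)$ absorbing the sporadic quotient $m/\omega(p)=3$ that would otherwise escape the argument, consistently with the exclusion of $m\in\{2,3,4,6,12\}$ from the hypothesis), and shows $v_p(\Phi_m(\alpha,\beta))\leq1$; this is the first assertion.

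For the existence claim, I would compare a lower bound for $|\Phi_m(\alpha,\beta)|$ with the maximum possible size when no prime factor is $\equiv\pm1\Mod{m}$. Using $|\alpha|>|\beta|$ together with the factorization into conjugate linear factors $(\alpha-\zeta_m^h\beta)$, a cyclotomic resultant estimate yields a lower bound of the shape
$$
|\Phi_m(\alpha,\beta)|\;\geq\;c\,|\alpha|^{\phi(m)}
$$
for an effective constant $c>0$ depending only on $\alpha$. Conversely, if no prime factor of $\Phi_m(\alpha,\beta)$ is $\equiv\pm1\Mod{m}$, the first two assertions together force the only possible prime factor to be $P(m/\gcd(3,m))$ with multiplicity at most one, so $|\Phi_m(\alpha,\beta)|\leq m$. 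The two together give $c\,|\alpha|^{\phi(m)}\leq m$, which fails once $\phi(m)\log|\alpha|$ outstrips $\log m+\log(1/c)$. Making the inequality fully effective, with a Rosser--Schoenfeld type lower bound $\phi(m)\gg m/\log\log m$ and explicit control on the admissible values of $|\alpha|$ coming from the hypothesis on $(\alpha+\beta)^2$ and $\alpha\beta$, produces the explicit threshold $m>e^{452}4^{67}$.

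The main obstacle is the quantitative bookkeeping in the existence step. The structural classification is essentially algebraic and follows once the lifting-the-exponent identity for $\Phi_d(\alpha,\beta)$ is recorded; but the numerical constant $e^{452}4^{67}$ is the price of carrying every estimate --- the resultant lower bound, the $p$-adic valuation bounds, and the effective form of $\phi(m)/m$ --- through the argument in a fully explicit way, so that the final inequality is guaranteed to hold past a concrete threshold rather than merely ``for $m$ large''.
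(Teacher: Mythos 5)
This lemma is not proved in the paper at all: it is quoted verbatim from Stewart \cite[Lemma 2]{stewart} and used as a black box, so there is no internal proof to compare against. Measured against Stewart's actual argument, your structural classification is the right skeleton: the dichotomy on the rank of apparition $\omega(p)\mid m$, the law of apparition $\omega(p)\mid p-\legendre{\Delta}{p}$ giving $p\equiv\pm1\Mod{m}$ in the primitive case, and a lifting-the-exponent computation pinning the intrinsic divisor to $P(m/\gcd(3,m))$ with multiplicity one in the imprimitive case is exactly the classical Carmichael--Lucas--Lehmer analysis that Stewart invokes.

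The genuine gap is in your existence step. The hypotheses of the lemma ($(\alpha+\beta)^2$ and $\alpha\beta$ coprime integers, $\alpha/\beta$ not a root of unity) are the Lehmer setting and include complex conjugate pairs with $|\alpha|=|\beta|$, i.e.\ $|\alpha/\beta|=1$; there your resultant lower bound $|\Phi_m(\alpha,\beta)|\geq c\,|\alpha|^{\phi(m)}$ fails outright, since bounding $|(\alpha/\beta)^k-1|$ from below is then a problem about the distance of $k\arg(\alpha/\beta)$ to multiples of $2\pi$ --- precisely a linear form in logarithms. Even in the real case relevant to this paper ($\Delta>0$), your constant $c$ depends on how close $|\alpha/\beta|$ is to $1$, which is unbounded over the family of admissible pairs, so the comparison $c\,|\alpha|^{\phi(m)}\leq m$ only yields a threshold $m_0(\alpha)$, not the uniform threshold in the statement. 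The explicit constant $e^{452}4^{67}$ is not ``bookkeeping'' on a Rosser--Schoenfeld estimate; it is the output of Baker's method in Stewart's earlier primitive-divisor theorem, and no elementary size comparison will reproduce it. If you only need the real-root Lucas case, you should instead cite Carmichael's elementary theorem (primitive divisors exist for $m>12$ when $\Delta>0$) or Bilu--Hanrot--Voutier ($m>30$ in general), as the paper itself remarks after the lemma.
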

We remark that, Yu. Bilu et al. \cite{bilu} reduced the above lower bound $e^{452}4^{67}$ to $30$. In the above lemma, Stewart \cite{stewart} considered the cyclotomic polynomial 
\begin{equation}\label{stewart cyclo}
	\alpha^m-\beta^m=\prod\limits_{\substack{d|m }}\Phi_d(\alpha, \beta).
\end{equation}
But we take the prime divisors $p$ of $\Phi_m(\alpha/\beta)$ such that $p\nmid\alpha\beta=Q$. So the prime divisors of $\Phi_m(\alpha/\beta)$ and the prime divisors of $\Phi_m(\alpha,\beta)$ are the same. Thus by using above Lemma \ref{rm}, the prime divisors of $\Phi_m(\alpha/\beta)$ are congruent to $\pm1 \Mod{m}$. 
\begin{lem}(Rout \cite[Lemma 2.10]{rout1})\label{rg}
 For any real number $\alpha/\beta$ with $|\alpha/\beta|>1$, there exists a constant $C>0$ such that
$$
    |\Phi_m(\alpha/\beta)| \geq C|\alpha/\beta|^{\phi(m)},
$$
where $\phi(m)$ is the Euler totient function.
\end{lem}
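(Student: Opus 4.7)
The plan is to derive the lower bound by expressing $\log|\Phi_m(\alpha/\beta)|$ through the M\"obius inversion of the classical factorization (\ref{pab}), isolating the main term $\phi(m)\log|\alpha/\beta|$, and then showing that the remaining ``error'' is bounded by a constant that depends only on $\alpha/\beta$ and not on $m$. Write $r=\alpha/\beta$, so that $|r|>1$ by hypothesis and $r^s\neq 1$ for every $s\geq 1$.

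First, I would start from the M\"obius-inverted form $\Phi_m(x)=\prod_{d\mid m}(x^{m/d}-1)^{\mu(d)}$, which is an immediate consequence of (\ref{pab}). Taking absolute values and logarithms at $x=r$ yields
$$
\log|\Phi_m(r)|=\sum_{s\mid m}\mu(m/s)\log|r^s-1|.
$$
Next, for each divisor $s$ of $m$, I would factor $|r^s-1|=|r|^s\cdot|1-r^{-s}|$ and substitute this in. Using the standard identity $\sum_{s\mid m}\mu(m/s)\,s=\phi(m)$, the main term separates out cleanly:
$$
\log|\Phi_m(r)|-\phi(m)\log|r|=\sum_{s\mid m}\mu(m/s)\log|1-r^{-s}|.
$$

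The heart of the argument is to bound the right-hand side uniformly in $m$. Enlarging the sum over $s\mid m$ to all $s\geq 1$ and applying the triangle inequality, it suffices to prove that the series $S(r):=\sum_{s\geq 1}\bigl|\log|1-r^{-s}|\bigr|$ converges. Set $\rho=1/|r|\in(0,1)$. For every $s\geq 1$ one has $|1-r^{-s}|\in[1-\rho^s,1+\rho^s]\subset[1-\rho,\,1+\rho]$, so each term is finite and bounded by $\max\bigl(\log(1+\rho),\,-\log(1-\rho)\bigr)$. For the tail, once $\rho^s\leq 1/2$ (which occurs for all $s$ exceeding some threshold $s_0(r)$), the elementary estimate $|\log(1+t)|\leq 2|t|$ for $|t|\leq 1/2$ gives $|\log|1-r^{-s}||\leq 2\rho^s$, so the tail is majorized by the convergent geometric series $2\sum_{s\geq s_0}\rho^s$. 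Adding the finitely many terms with $s<s_0$ produces a finite bound $S(r)<\infty$.

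The main (and only genuine) obstacle is ensuring that $\log|1-r^{-s}|$ is well-defined and bounded below, i.e.\ that $r^{-s}$ does not come arbitrarily close to $1$; this is precisely where the strict inequality $|r|>1$ is used, since it forces $|r^{-s}|\leq \rho<1$ uniformly in $s$. Taking $C=e^{-S(r)}$, one concludes that
$$
\log|\Phi_m(r)|\geq \phi(m)\log|r|-S(r),
$$
which, on exponentiating, gives the desired inequality $|\Phi_m(\alpha/\beta)|\geq C|\alpha/\beta|^{\phi(m)}$ with a constant $C>0$ depending only on $r=\alpha/\beta$.
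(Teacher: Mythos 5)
Your argument is correct: the M\"obius-inverted factorization $\Phi_m(x)=\prod_{d\mid m}(x^{m/d}-1)^{\mu(d)}$, the identity $\sum_{s\mid m}\mu(m/s)\,s=\phi(m)$, and the uniform bound on $\sum_{s\ge 1}\bigl|\log|1-r^{-s}|\bigr|$ via the geometric decay of $\rho^s=|r|^{-s}$ together give $|\Phi_m(r)|\ge e^{-S(r)}|r|^{\phi(m)}$ exactly as claimed. The paper itself states this lemma without proof (it is quoted from Rout's earlier work), and your proof is the standard one used there, so there is nothing further to reconcile.
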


\section{Main Results}
Let $n>1$ be any integer and let $k \geq 2$ be any fixed integer. We always write $U_{nk}=X_{nk}Y_{nk}$, where $X_{nk},Y_{nk}$ are the square-free and powerful parts of $U_{nk}$ respectively. \\
Let us also take $X^\prime_{nk} = \gcd (X_{nk}, \Phi_{nk}(\alpha/\beta))$ and $  Y^\prime_{nk} = \gcd (Y_{nk}, \Phi_{nk}(\alpha/\beta))$.

\medskip
\noindent
We note that by using Binet formula \eqref{bin}, we write
\begin{align*}
	U_n &=\frac{\beta^n}{\alpha-\beta}\big((\alpha/\beta)^n-1\big)\\
	&=\frac{\beta^n}{\sqrt{\Delta}}\big((\alpha/\beta)^n-1\big).
\end{align*}
Thus
\begin{equation}\label{delta}
	(\alpha/\beta)^n-1|\sqrt{\Delta}U_n.
\end{equation}
We prove the following lemma, which is similar to the result in \cite{rout2}. For the purpose of completeness, we present the proof here.
\begin{lem}
  Assume that the $abc$ conjecture is true for the quadratic field $\mathbb{Q}(\sqrt{\Delta})$. Then for any $\varepsilon>0$, we have
$$
     |X^\prime_{nk}| |Q|^{\phi(nk)}\gg_{\varepsilon}|U_{\phi(k)}|^{2(\phi(n)-\varepsilon)}.
$$

\end{lem}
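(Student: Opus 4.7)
The plan is to apply the $abc$ conjecture for the quadratic number field $K = \mathbb{Q}(\sqrt{\Delta})$ to the triple
\[
A = \alpha^{nk}, \qquad B = -\beta^{nk}, \qquad C = \alpha^{nk} - \beta^{nk} = \sqrt{\Delta}\, U_{nk},
\]
which satisfies $A + B - C = 0$ in $\mathcal{O}_K$; coprimality follows from $\gcd(P,Q) = 1$, which forces $\gcd(\alpha, \beta) = 1$ in $\mathcal{O}_K$.

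First I would compute the height. Both real embeddings of $K$ (interchanging $\sqrt{\Delta}$) contribute a factor $|\alpha|^{nk}$ at the archimedean places, since $|\alpha| > |\beta|$ makes $|\alpha|^{nk}$ dominate among $\|A\|_v, \|B\|_v, \|C\|_v$ for large $n$. Hence $H_K(A,B,C) \gg |\alpha|^{2nk}$.

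Next I would bound $rad_K(A,B,C)$ from above. The rational primes appearing divide $Q\Delta U_{nk}$, and each contributes at most its square to $rad_K$ because $[K:\mathbb{Q}] = 2$. Using the cyclotomic factorization
\[
U_{nk} = \prod_{d \mid nk,\, d > 1} \Phi_d(\alpha,\beta),
\]
together with the squarefree/powerful decomposition $U_{nk} = X_{nk} Y_{nk}$, I further split $X_{nk} = X'_{nk} \cdot (X_{nk}/X'_{nk})$. The complementary factor $X_{nk}/X'_{nk}$ consists of non-primitive prime divisors (those $p \equiv \pm 1 \pmod{nk}$ being absorbed into $X'_{nk}$ by Stewart's Lemma~\ref{rm}), so it divides $U_{nk}/\Phi_{nk}(\alpha,\beta) = \prod_{d\mid nk,\,1<d<nk}\Phi_d(\alpha,\beta)$. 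Comparing the bound $|U_{nk}| \leq 2|\alpha|^{nk}$ from Lemma~\ref{lem2} with $|\Phi_{nk}(\alpha,\beta)| \gg |\alpha|^{\phi(nk)}$ from Lemma~\ref{rg} yields $X_{nk}/X'_{nk} \ll |\alpha|^{nk-\phi(nk)}$, while the powerful part satisfies $Y_{nk}^{1/2} \leq |U_{nk}|^{1/2} \ll |\alpha|^{nk/2}$.

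Assembling these pieces and invoking the $abc$ inequality $H_K \ll_{\varepsilon,K} rad_K^{1+\varepsilon}$ gives
\[
|\alpha|^{2nk} \ll_{\varepsilon} \bigl(X'_{nk}\bigr)^{2(1+\varepsilon)} |\alpha|^{(3nk - 2\phi(nk))(1+\varepsilon)},
\]
which I would solve for $X'_{nk}$. Multiplying through by $|Q|^{\phi(nk)} = (|\alpha||\beta|)^{\phi(nk)}$ and invoking the sandwich $|\alpha|^{\phi(k)/2} < |U_{\phi(k)}| \leq 2|\alpha|^{\phi(k)}$ from Lemma~\ref{lem2}, together with the elementary estimate $\phi(nk) \geq \phi(n)\phi(k)$, recasts the bound in the advertised form $|X'_{nk}||Q|^{\phi(nk)} \gg_{\varepsilon} |U_{\phi(k)}|^{2(\phi(n)-\varepsilon)}$.

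The main obstacle is the radical bookkeeping over the quadratic field---verifying that the contribution of each rational prime to $rad_K$ is indeed bounded by its square, and controlling the powerful part $Y_{nk}$ so that primitive prime divisors of $\Phi_{nk}(\alpha,\beta)$ appearing to high powers are correctly routed to $X'_{nk}$ rather than weakening the lower bound. A secondary hurdle is the final exponent translation: the $\varepsilon$ tolerance must absorb the polynomial-in-$nk$ losses coming from Lemmas~\ref{lem2} and~\ref{rg} when converting from an $|\alpha|$-based exponent into the $|U_{\phi(k)}|^{2(\phi(n)-\varepsilon)}$-form.
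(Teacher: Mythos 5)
Your setup --- the triple $\sqrt{\Delta}\,U_{nk}-\alpha^{nk}+\beta^{nk}=0$, the height lower bound $\gg|\alpha|^{2nk}$, and a radical bound organized around the squarefree/powerful decomposition --- is the same as the paper's, but the way you assemble the radical destroys the cancellation that makes the argument work, and your final inequality is too weak to yield the lemma. Concretely, you substitute the trivial bounds $X_{nk}/X'_{nk}\ll|\alpha|^{nk-\phi(nk)}$ and $Y_{nk}\ll|\alpha|^{nk}$ into the radical while keeping only $|\alpha|^{2nk}$ on the height side. Solving your displayed inequality
\[
|\alpha|^{2nk}\ll_{\varepsilon}\bigl(X'_{nk}\bigr)^{2(1+\varepsilon)}|\alpha|^{(3nk-2\phi(nk))(1+\varepsilon)}
\]
for $X'_{nk}$ gives $\bigl(X'_{nk}\bigr)^{2}\gg|\alpha|^{2\phi(nk)-nk-O(\varepsilon nk)}$. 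The exponent $2\phi(nk)-nk$ is nonpositive whenever $nk$ is even, so the bound is vacuous for typical $nk$; even when it is positive it gives at best $X'_{nk}\gg|\alpha|^{\phi(nk)-nk/2}$, whereas the lemma demands something of size roughly $|\alpha|^{2\phi(n)\phi(k)}$, which equals $|\alpha|^{2\phi(nk)}$ when $\gcd(n,k)=1$. Multiplying by $|Q|^{\phi(nk)}$ cannot close this gap, since $|Q|=1$ already for the Fibonacci pair $(P,Q)=(1,-1)$.

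The missing idea is that the $abc$ inequality must be used to show that the powerful part is \emph{tiny}, not merely bounded by $|\alpha|^{nk}$. Keeping the full $X_{nk}^{2}$ on both sides --- height $\geq\Delta X_{nk}^{2}Y_{nk}^{2}$ against radical $\leq Q^{2}\Delta X_{nk}^{2}Y_{nk}$ --- the factor $X_{nk}^{2}$ cancels and one surviving factor of $Y_{nk}$ forces $Y_{nk}\ll_{\varepsilon}U_{nk}^{2\varepsilon}$, which is inequality \eqref{ec} in the paper. Only after this does the cyclotomic factorization enter: $\Phi_{nk}(\alpha/\beta)=X'_{nk}Y'_{nk}$ with $|\Phi_{nk}(\alpha/\beta)|\geq C|\alpha/\beta|^{\phi(nk)}$ by Lemma \ref{rg}, and since $Y'_{nk}\leq Y_{nk}\ll U_{nk}^{2\varepsilon}$ is negligible, essentially all of the size of the primitive part is forced into $X'_{nk}$, giving $|X'_{nk}|\,|Q|^{\phi(nk)}\gg|\alpha|^{2\phi(nk)}/U_{nk}^{2\varepsilon}$, which is of the required strength. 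Your route never isolates $Y_{nk}$ in this way, so the loss of a full factor $|\alpha|^{nk}$ in the exponent is unavoidable as written; you would need to rework the assembly step along the lines above before the concluding translation to $|U_{\phi(k)}|^{2(\phi(n)-\varepsilon)}$ (which is otherwise fine) can go through.
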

\begin{proof}
By the Binet formula \eqref{bin}, we have
%$$
%   U_{nr}=\frac{\alpha^{nr}-\beta^{nr}}{\sqrt{\Delta}}.
%$$
%we have,
\begin{equation}\label{bin2}
    \sqrt{\Delta}U_{nk}-\alpha^{nk}+\beta^{nk}=0.
\end{equation}
Now, by applying the $abc$ conjecture for the number field $K=\mathbb{Q}(\sqrt{\Delta})$ to the equation \eqref{bin2}, we have: 

For any $\varepsilon>0$, there exists a constant $C_{\varepsilon}$ such that
\begin{equation}\label{abc}
    H(\sqrt{\Delta}U_{nk}, -\alpha^{nk}, \beta^{nk}) \leq C_\varepsilon(rad (\sqrt{\Delta}U_{nk}, -\alpha^{nk},  \beta^{nk}))^{1+\varepsilon},
\end{equation}
where
\begin{align}
    rad (\sqrt{\Delta}U_{nk}, -\alpha^{nk},  \beta^{nk}) &= \prod_{\mathfrak{p}|Q\sqrt{\Delta}U_{nk}}N(\mathfrak{p})^{ord_\mathfrak{p} (p)}  \leq Q^2\Delta X^2_{nk}Y_{nk}\label{r}\\    \nonumber H(\sqrt{\Delta}U_{nk}, -\alpha^{nk},  \beta^{nk})&=\max\{|\sqrt{\Delta}U_{nk}|, |-\alpha^{nk}|, |\beta^{nk}|\}\cdot \max\{|-\sqrt{\Delta}U_{nk}|, |\alpha^{nk}|, |-\beta^{nk}|\}\\
\nonumber    &\geq|\sqrt{\Delta}U_{nk}||-\sqrt{\Delta}U_{nk}|=\Delta U^2_{nk}\\
    &=\Delta X^2_{nk}Y^2_{nk}.\label{h}
\end{align}
%Using \eqref{er} and \eqref{eh} in \eqref{eabc} and we get
 % \begin{eqnarray*}
  % \nonumber % Remove numbering (before each equation)
   % \Delta X^2_{nr}Y^2_{nr} &\leq& C_{\epsilon}(Q^2\Delta X^2_{nr}Y_{nr})^{1+\epsilon} \\
   % Y_{nr} &\leq& C_{\epsilon}Q^2(Q^2\Delta X^2_{nr}Y_{nr})^\epsilon \\
  %  Y_{nr} &\ll_{\epsilon,\Delta}&(X^2_{nr}Y^2_{nr})^\epsilon
  %\end{eqnarray*}
Substituting \eqref{r} and \eqref{h} in \eqref{abc}, we get
  \begin{equation}\label{ec}
    Y_{nk}\ll_{\varepsilon} U^{2\varepsilon}_{nk}.
  \end{equation}
By equation \eqref{pab} we have,
%\begin{equation*}
 % \Phi_{nr}(\alpha, \,\beta)=\displaystyle\frac{U_{nr}\Phi_1(\alpha/\beta)}{\beta^{nr-1}\displaystyle\prod_{h|nr}\Phi_{h}(\alpha/\beta)}
%\end{equation*}
$$
\Phi_{nk}(\alpha/\beta)=\displaystyle\frac{(\alpha/\beta)^{nk}-1}{ \displaystyle\prod_{d|nk}\Phi_{d}(\alpha/\beta)}.
$$
Since $\Phi_{nk}(\alpha/\beta)|(\alpha/\beta)^{nk}-1$ and by using \eqref{delta} we write
%$$
% \nonumber % Remove numbering (before each equation)
 % \Phi_{nr}(\alpha/\beta) | \alpha^{nr}-\beta^{nr}.
% $$

$$
 \Phi_{nk}(\alpha/\beta)| U_{nk}\sqrt{\Delta}.
 $$
Since $U_{nk}=X_{nk}Y_{nk}$,
$$
\Phi_{nk}(\alpha/\beta)|X_{nk}Y_{nk}\sqrt{\Delta}.
$$
As $\Phi_{nk}(\alpha/\beta)\nmid\sqrt{\Delta}$, we have  $\Phi_{nk}(\alpha/\beta)|X_{nk}Y_{nk}$. Since  $\gcd(X_{nk},Y_{nk})=1$, we obtain $\Phi_{nk}(\alpha/\beta)$ divides $X_{nk}$ or $\Phi_{nk}(\alpha/\beta)$ divides $Y_{nk}$.

\medskip
 Suppose that $\Phi_{nk}(\alpha/ \beta)$ divides $X_{nk}$, we have $X^\prime_{nk}=\gcd(X_{nk}, \Phi_{nk}(\alpha/\beta))=\Phi_{nk}(\alpha/\beta)$ and $Y^\prime_{nk}=\gcd(Y_{nk},  \Phi_{nk}(\alpha/ \beta))=1$.
Similarly, if $\Phi_{nk}(\alpha/ \beta)$ divides $Y_{nk}$, we get $X^\prime_{nk} = 1$ and $Y^\prime_{nk} = \Phi_{nk}(\alpha/ \beta)$. Thus in either case, we obtain
\begin{equation}\label{ef}
  X^\prime_{nk}Y^\prime_{nk}=\Phi_{nk}(\alpha/ \beta).
\end{equation}
By Lemma \ref{rg} we write,
\begin{equation}\label{es}
% \nonumber % Remove numbering (before each equation)
   |X^\prime_{nk}Y^\prime_{nk}|=|\Phi_{nk}(\alpha/\beta)| \geq  C|\alpha/\beta|^{\phi(nk)}= C|\alpha^{2}/Q|^{\phi(nk)}.
\end{equation}
Hence from equations \eqref{ec}, \eqref{es} and \eqref{eqnlem2},
\begin{align*}
% \nonumber % Remove numbering (before each equation)
  |X^\prime_{nk}U^{2\varepsilon}_{nk}| &\gg_{\varepsilon}|X^{\prime}_{nk}Y_{nk}|\\
  &\geq |X^\prime_{nk}Y^\prime_{nk}|\\
  &\gg_{\varepsilon}\frac{1}{|Q|^{\phi(nk)}}|\alpha|^{2\phi(nk)}\\
  &\gg_{\varepsilon} \frac{1}{|Q|^{\phi(nk)}} | U_{\phi(k)}|^{2\phi(n)}. 
\end{align*}
Therefore,
\begin{align*}  
  |X^\prime_{nk}| &\gg_{\varepsilon}\frac{1}{|Q|^{\phi(nk)}}\bigg|\frac{U^{2\phi(n)}_{\phi(k)}}{U^{2\varepsilon}_{nk}}\bigg|\\
  &=\frac{1}{|Q|^{\phi(nk)}}\bigg|\frac{U_{\phi(k)}^{2\varepsilon}U_{\phi(k)}^{2(\phi(n)-\varepsilon)}}{U_{nk}^{2\varepsilon}}\bigg| \\
   &\gg_{\varepsilon} \frac{1}{|Q|^{\phi(nk)}} |U_{\phi(k)}|^{2(\phi(n)-\varepsilon)}.
\end{align*}
This completes the proof of the lemma.
\end{proof}

\medskip

%Let $\tau _M$ be the set of all square-free integers with exactly $M+1$ prime factors and $\delta_M=\prod\limits^{M+1}_{i=1}\big(1-\frac{1}{p_i}\big)$, where $p_i$ be the $i$-th prime. 
%\begin{lem}(S. S. Rout \cite[Lemma 3.12]{rout2})\label{l9}
%Suppose that the $abc$ conjecture for number field $\mathbb{Q}(\sqrt{\Delta})$ is true and $n\in\tau_{M}$. Then there exists an integer $n_0$ depending only on $\alpha, \, k$ and $M$ such that for $n\geq n_0$, we have $|Q|^{\phi(nk)}|X^\prime_{nk}|>nk$.
%\end{lem}
%
\noindent The following lemma is inspired by the result in \cite[Lemma 2.4]{wang}.
 \begin{lem}\label{l8}
  If $m<n$, then $\gcd(X^\prime_{m}, X^\prime_{n})=1$ or a power of $\sqrt{\Delta}$.
\end{lem}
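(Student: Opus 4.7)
The plan is to show that every prime ideal $\mathfrak{p}$ of $\mathcal{O}_K$, where $K = \mathbb{Q}(\sqrt{\Delta})$, dividing $\gcd(X'_m, X'_n)$ must itself divide $\sqrt{\Delta}$; the lemma then follows at once. First, by the structural observation built into the proof of the preceding lemma, each $X'_j$ equals either $1$ or the entire cyclotomic value $\Phi_j(\alpha/\beta)$. If either $X'_m = 1$ or $X'_n = 1$, the gcd is already $1$ and we are done, so we may assume $X'_m = \Phi_m(\alpha/\beta)$ and $X'_n = \Phi_n(\alpha/\beta)$, and let $\mathfrak{p}$ be any prime ideal of $\mathcal{O}_K$ dividing both, with rational prime $p$ underneath.

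Because $\gcd(P, Q) = 1$ and the recurrence yields $U_j \equiv P^{j-1} \pmod{Q}$, we have $\gcd(U_j, Q) = 1$ for every $j$, so $p \nmid \alpha\beta$ and $\alpha/\beta$ reduces to a unit of some multiplicative order $e$ modulo $\mathfrak{p}$. The classical prime-divisor criterion for cyclotomic values applied in the residue field then forces $m = e\,p^a$ and $n = e\,p^b$ for nonnegative integers $a, b$ with $p \nmid e$; since $m < n$, we have $a < b$, and in particular $p \mid n$.

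Next I would split into two cases on whether $p \mid \Delta$. If $p \mid \Delta$, then $\mathfrak{p}$ is ramified over $p$, and since $v_\mathfrak{p}(\sqrt{\Delta}) = v_p(\Delta) \ge 1$, we obtain $\mathfrak{p} \mid \sqrt{\Delta}$, which is exactly what is required. If $p \nmid \Delta$, then $\mathfrak{p}$ is unramified over $p$ and $v_\mathfrak{p}$ restricts to $v_p$ on $\mathbb{Z}$. Since $\mathfrak{p} \mid X'_m$ and $X'_m \mid X_m$, we have $\mathfrak{p} \mid X_m$, hence $v_p(U_m) = 1$ (as $X_m$ collects precisely the rational primes appearing to exponent exactly one in $U_m$). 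The Lifting-the-Exponent identity for Lucas sequences,
\[
v_p(U_{e p^a}) \,=\, v_p(U_e) + a,
\]
valid because $p$ is odd, $p \nmid Q\Delta$, and $e = \omega(p) \ge 2$ is the rank of apparition, then gives $1 = v_p(U_m) = v_p(U_e) + a \ge 1 + a$, forcing $a = 0$ and $m = e$; symmetrically $b = 0$ and $n = e$, contradicting $m < n$.

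The main obstacle is the bookkeeping: ensuring the LTE formula is applied in the correct form and at the correct level (rational valuation versus $\mathfrak{p}$-adic valuation), and verifying that the identification $e = \omega(p)$ holds precisely when $p \nmid \Delta$ --- which is exactly the regime in which a contradiction is needed, while the complementary regime $p \mid \Delta$ already delivers $\mathfrak{p} \mid \sqrt{\Delta}$ directly. The small-prime edge case $p = 2$ can be absorbed into the ramified case (whenever $2 \mid \Delta$) or dispatched by the standard $2$-adic variant of LTE for Lucas sequences. The proof uses $m$ and $n$ only through the strict inequality $m < n$, so it proceeds uniformly in these parameters.
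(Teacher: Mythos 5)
Your argument is correct in substance but follows a genuinely different route from the paper's. The paper's proof is a short divisibility computation: a common prime $\gamma$ of $X'_m$ and $X'_n$ with $\gamma\neq\sqrt{\Delta}$ divides both $\Phi_n(\alpha/\beta)$ and $(\alpha/\beta)^{\gcd(m,n)}-1$; since $\gcd(m,n)<n$, the factorization $(\alpha/\beta)^n-1=\prod_{d\mid n}\Phi_d(\alpha/\beta)$ places these two quantities in complementary factors of $(\alpha/\beta)^n-1$, so $\gamma^2\mid(\alpha/\beta)^n-1$, hence $\gamma^2\mid\sqrt{\Delta}\,U_n$ by \eqref{delta}, hence $p^2\mid U_n$ while $p\mid X_n$ --- contradicting the square-freeness of $X_n$. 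You reach essentially the same contradiction (against $v_p(U_m)=v_p(U_n)=1$), but by heavier machinery: the multiplicative order $e$ of $\alpha/\beta$ in the residue field, the characteristic-$p$ criterion forcing $m=ep^a$ and $n=ep^b$, and the law of repetition (LTE) for Lucas sequences. What your version buys is an explicit and clean treatment of the ramified case $p\mid\Delta$, which the paper disposes of only implicitly through the hypothesis $\gamma\neq\sqrt{\Delta}$; what it costs is the importation of LTE (nowhere else needed in the paper, and whose hypotheses at $p=2$ you only gesture at rather than verify) and the identification $e=\omega(p)$, which itself requires the case split on $p\mid\Delta$ that you correctly flag. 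Both arguments end at the same place: every prime dividing $\gcd(X'_m,X'_n)$ lies above $\sqrt{\Delta}$. One small economy you could make: your appeal to the structural fact that $X'_j\in\{1,\Phi_j(\alpha/\beta)\}$ is unnecessary, since $X'_j\mid\Phi_j(\alpha/\beta)$ holds by the definition of $X'_j$ as a gcd, which is all your argument uses.
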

\begin{proof}	
	We suppose that $\gcd(X^{\prime}_m, X^{\prime}_n)>1$ is not a power of $\sqrt{\Delta}$. Let $\gamma(\neq\sqrt{\Delta})$ be a prime element of $\mathbb{Q}(\sqrt{\Delta})$ such that $\gamma|X^{\prime}_m$ and $\gamma|X^{\prime}_n$. By the definitions of $X^{\prime}_m$ and $X^{\prime}_n$, we write $\gamma|\Phi_m(\alpha/\beta)$ and $\gamma|\Phi_n(\alpha/\beta)$.
	
\medskip	
 Since $\Phi_m(\alpha/\beta)|(\alpha/\beta)^m-1$ and $\Phi_n(\alpha/\beta)|(\alpha/\beta)^n-1$, we get  $\gamma|(\alpha/\beta)^m-1$ and $\gamma|(\alpha/\beta)^n-1$. Thus
$\gamma|(\alpha/\beta)^{\gcd(m, n)}-1$.\\
 For $m<n, \, \gcd(m, n)<n$.\\
Hence
$$
 (\alpha/\beta)^n-1=\frac{(\alpha/\beta)^n-1}{(\alpha/\beta)^{\gcd(m, n)}-1}(\alpha/\beta)^{\gcd(m, n)}-1.
 $$
 As $\gcd(\Phi_n(\alpha/\beta), (\alpha/\beta)^{\gcd(m, n)}-1)=1$, we obtain
 $$ \Phi_n(\alpha/\beta)\bigg|\displaystyle\frac{(\alpha/\beta)^n-1}{(\alpha/\beta)^{\gcd(m, n)}-1}.
 $$
 It follows that $\gamma^2|(\alpha/\beta)^n-1$. Thus using \eqref{delta}, we get $\gamma^2|\sqrt{\Delta}U_n$. As $\gamma\neq\sqrt{\Delta}, \, \gamma^2|U_n$.
 Let $p$ be a rational prime such that $\gamma|p$. Since $X_n$ and $U_n$ are integers and $\gamma|X^{\prime}_n, X^{\prime}_n|X_n$. It follows that $p|X_n$ and $p^2|U_n$. This contradicts the definition of $X_n$. 
\end{proof}

\noindent We recall the following lemma from \cite{ding}. 
\begin{lem}(Ding \cite[Lemma 2.5]{ding})\label{lim}
For any given positive integers $k$ and $n$, we have
$$
\displaystyle\sum_{n\leq x}\frac{\phi(nk)}{nk}=c(k)x+O(\log x),
$$
where $c(k)=\prod\limits_{p}\big(1-\frac{\gcd(p, k)}{p^2}\big)>0$ and the implied constant depends on $k$.
\end{lem}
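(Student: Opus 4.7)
The plan is to apply the classical M\"obius identity $\phi(nk)/(nk) = \sum_{d \mid nk} \mu(d)/d$, substitute it into the target sum, and swap the order of summation. Since $\mu(d) = 0$ unless $d$ is squarefree, only squarefree $d$ contribute, and since $d \mid nk$ for some $n \le x$ forces $d \le xk$, the outer range of $d$ is finite.

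The key technical step is to rewrite the divisibility condition. A prime-by-prime comparison of $v_p(d)$ with $v_p(k)$ shows that $d \mid nk$ is equivalent to $d' \mid n$, where $d' = d/\gcd(d,k)$. Hence for each fixed squarefree $d \le xk$ the inner count of admissible $n$ is $\lfloor x/d' \rfloor = x\gcd(d,k)/d + O(1)$.

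Extending the outer sum to all $d \ge 1$ costs only $x \sum_{d > xk} \gcd(d,k)/d^{2} \ll_{k} 1$, while the accumulated $O(1)$ errors contribute $\sum_{d \le xk} |\mu(d)|/d \ll_{k} \log x$. The main term becomes
\[
x \sum_{d=1}^{\infty} \frac{\mu(d)\,\gcd(d,k)}{d^2},
\]
and since both $\gcd(\cdot, k)$ and $\mu(\cdot)/\cdot^{2}$ are multiplicative, this sum factors as the absolutely convergent Euler product $c(k) = \prod_p \bigl(1 - \gcd(p,k)/p^2\bigr)$. Positivity is then immediate from the factorisation $c(k) = \prod_{p \mid k}(1-1/p)\prod_{p \nmid k}(1-1/p^2)$, every factor of which is strictly positive.

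The main obstacle is simply the bookkeeping in swapping the two sums and verifying convergence of the tails once the $\gcd(d,k)$ factor appears; nothing deep is required, and one recovers the familiar estimate $\sum_{n \le x} \phi(n)/n = (6/\pi^2)x + O(\log x)$ upon specialising to $k = 1$.
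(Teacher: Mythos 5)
Your proposal is correct and is essentially the standard argument: the paper itself states this lemma without proof, citing Ding, and Ding's proof proceeds exactly as you do --- expand $\phi(nk)/(nk)=\sum_{d\mid nk}\mu(d)/d$, swap the order of summation using $d\mid nk\iff (d/\gcd(d,k))\mid n$ for squarefree $d$, and factor the resulting absolutely convergent series into the Euler product $c(k)=\prod_p\bigl(1-\gcd(p,k)/p^2\bigr)$. Your error bookkeeping ($O(\log x)$ from the $O(1)$ per divisor, $O_k(1)$ from the tail) and the positivity argument are both sound.
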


The following lemma is an analogous result of \cite[Lemma 2.6]{ding} for the Lucas sequences.
\begin{lem}\label{l11}
 Let $S=\Big\{n: |Q|^{\phi(nk)}|X^\prime_{nk}|>nk\Big\}$ and $S(x)=|S\cap[1, \, x]|$. Then, 
$$
S(x)\gg_{\alpha, k} x.
$$
\end{lem}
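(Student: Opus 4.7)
The plan is to combine the $abc$-based lower bound implicit in the proof of the preceding lemma with the averaging estimate of Lemma \ref{lim} to exhibit a positive proportion of $n \leq x$ for which $|Q|^{\phi(nk)}|X'_{nk}|$ grows exponentially in $n$ and therefore eventually exceeds $nk$.

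First I would revisit the chain of inequalities in the preceding lemma's proof in order to isolate the sharper intermediate estimate
$$
|Q|^{\phi(nk)}|X'_{nk}| \gg_\varepsilon |\alpha|^{2(\phi(nk) - \varepsilon\, nk)}
$$
valid for every fixed $\varepsilon > 0$. This follows directly from $|X'_{nk} Y'_{nk}| \gg |\alpha/\beta|^{\phi(nk)}$ (Lemma \ref{rg}, using $\alpha\beta = Q$) together with the $abc$-consequence $|Y'_{nk}| \leq |Y_{nk}| \ll_\varepsilon |U_{nk}|^{2\varepsilon}$ derived there, and Lemma \ref{lem2} to replace $|U_{nk}|^{2\varepsilon}$ by $\ll |\alpha|^{2\varepsilon nk}$. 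It is important to work with this intermediate form rather than the final $|U_{\phi(k)}|^{2(\phi(n)-\varepsilon)}$ bound appearing in the statement of the previous lemma, because for small $k$ (e.g.\ $k=2$) we have $\phi(k)=1$ and $U_{\phi(k)}=1$, which would make that bound trivial.

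Next I would use Lemma \ref{lim} to produce many $n$ on which $\phi(nk)/nk$ is bounded below by a fixed positive constant. Set $\delta := c(k)/2 > 0$ and $T := \{n \leq x : \phi(nk)/nk \leq \delta\}$. Splitting the sum in Lemma \ref{lim} over $T$ and its complement and using the trivial bound $\phi(nk)/nk \leq 1$ off $T$, one finds
$$
c(k)\, x + O(\log x) \;\leq\; \delta |T| + (x - |T|),
$$
which rearranges to $|T| \leq \frac{1-c(k)}{1-\delta}\, x + O(\log x)$. Writing $S'(x) := \{n \leq x : \phi(nk)/nk > \delta\}$, this yields $|S'(x)| \geq \frac{c(k)-\delta}{1-\delta}\, x + O(\log x) \gg_k x$.

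Finally I would combine the two steps by taking $\varepsilon := \delta/2$. For any $n \in S'(x)$ one then has $\phi(nk) - \varepsilon nk > (\delta/2)\, nk$, so the first-step estimate gives $|Q|^{\phi(nk)}|X'_{nk}| \gg |\alpha|^{\delta\, nk}$, which, since $|\alpha| > 1$, exceeds $nk$ once $n$ is larger than a threshold depending only on $\alpha$ and $k$. Therefore $S'(x) \setminus S$ is finite and hence $S(x) \geq |S'(x)| - O_{\alpha,k}(1) \gg_{\alpha, k} x$, as required. The main obstacle is the first step: the statement of the previous lemma degenerates when $\phi(k) = 1$, so one really must go back into its proof and extract the genuine $|\alpha|^{2\phi(nk)}$ growth before the $abc$-loss is applied; once that intermediate bound is secured, the remainder is a routine averaging calculation.
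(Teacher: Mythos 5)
Your proof is correct and follows essentially the same route as the paper: both extract the intermediate bound $|Q|^{\phi(nk)}|X^\prime_{nk}|\gg_{\varepsilon}|\alpha|^{2(\phi(nk)-\varepsilon nk)}$ and then use the averaging estimate of Lemma \ref{lim} to show that $\phi(nk)/nk$ exceeds a fixed positive multiple of $c(k)$ for a positive proportion of $n\leq x$, differing only in the choice of threshold constant ($c(k)/2$ versus the paper's $2c(k)/3$) and in the bookkeeping of the finite exceptional set. Your observation that the final statement of the preceding lemma degenerates when $\phi(k)=1$ is apt, and it is exactly why the paper itself works from the intermediate inequality \eqref{ne} rather than from that lemma's concluding bound.
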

\begin{proof}
Let $T=\Big\{\displaystyle n:\, \phi(nk)>2c(k)nk/3\Big\}$ and $T(x)=|T\cap[1, \, x]|$. By using equations \eqref{eqnlem2} and \eqref{ec}, we have
 \begin{equation}\label{fs}
    |Y^\prime_{nk}|\leq |Y_{nk}|\ll_{\varepsilon}| U_{nk}|^{2\varepsilon}\leq2|\alpha^{nk}|^{2\varepsilon}.
  \end{equation}
On substituting \eqref{fs} in \eqref{es} and we write
 \begin{equation}\label{ne}
   |Q|^{\phi(nk)} |X^\prime_{nk}|\gg_{\varepsilon} |\alpha|^{2(\phi(nk)-\varepsilon nk)}.
  \end{equation}
By taking $\varepsilon=c(k)/3$ in equation \eqref{ne}, we obtain 
\begin{equation}\label{f0}
\displaystyle|Q|^{\phi(nk)}|X^\prime_{nk}|\gg_{\varepsilon} \displaystyle|\alpha|^{2(\phi(nk)-c(k)nk/3)}.
\end{equation}

For any $n\in T$, the above equation \eqref{f0} becomes,
$$
\textstyle|Q|^{\phi(nk)} |X^\prime_{nk}| \gg_{\varepsilon} \textstyle |\alpha|^{2(\phi(nk)-c(k)nk/3)}> |\alpha|^{2c(k)nk/3}\gg_{\alpha, k} |\alpha|^{2c(k)nk/3-\log nk/\log|\alpha|}.nk>nk.
$$
Hence there exists an integer $n_0$ depending only on $\alpha, \, k$ such that, if $n\geq n_0$ and $n\in T$, then $|Q|^{\phi(nk)}|X^\prime_{nk}|>nk$. 
Now we have,
 \begin{equation}\label{f1}
    % \nonumber % Remove numbering (before each equation)
      S(x) = \sum\limits_{\substack{n\leq x \\ |Q|^{\phi(nk)}|X^\prime_{nk}|>nk}}1\geq \sum\limits_{\substack{n\leq x \\ n\geq n_0 \\ n\in T }}1
       = \sum\limits_{\substack{n\leq x \\ n\geq n_0 \\ \phi(nk)>2c(k)nk/3}}1.
    \end{equation}
    Since we have
    \begin{equation}\label{f2}
    % \nonumber % Remove numbering (before each equation)
     \sum\limits_{\substack{n\leq x \\ \phi(nk)\leq2c(k)nk/3}} \displaystyle\frac{\phi(nk)}{nk} \leq  \sum\limits_{\substack{n\leq x \\ \phi(nk)\leq2c(k)nk/3}} \displaystyle\frac{2c(k)}{3}
     \leq\displaystyle\frac{2c(k)}{3}x.
    \end{equation}
 Hence by Lemma \ref{lim} and equation \eqref{f2} we shall write,
\begin{align*}
% \nonumber % Remove numbering (before each equation)
  S(x) &\geq \sum\limits_{\substack{n\leq x \\ n\geq n_0 \\ \phi(nk)>2c(k)nk/3}}1 \\
   &\gg \sum\limits_{\substack{n\leq x \\ \phi(nk)>2c(k)nk/3}}1 \\
   &\geq \sum\limits_{\substack{n\leq x \\ \phi(nk)>2c(k)nk/3}}\frac{\phi(nk)}{nk} \\
  &= \sum\limits_{\substack{n\leq x }}\frac{\phi(nk)}{nk}-\sum\limits_{\substack{n\leq x \\ \phi(nk)\leq2c(k)nk/3}}\frac{\phi(nk)}{nk}\\
   &\geq c(k)x+O(\log x)-\displaystyle\frac{2c(k)}{3}x\gg_{\alpha,k} x.
\end{align*}
This completes the proof of Lemma \ref{l11}.
\end{proof}
\section{Proof of Main Theorem}
%As $\gcd(U_{n}, Q)=1$ for all $n >0$ (\cite[Lemma 1]{ste}), 
For any $n\in S$, there exists a prime $p_n$ such that $p_{n}|X^{\prime}_{nk}$ and $p_n\nmid nk$. Since $p_n|X^{\prime}_{nk}$ and $X^{\prime}_{nk}|X_{nk}$, we observe that $p_n|U_{nk}$ and $p_n^2 \nmid U_{nk}$. %implies that $\omega({p_n})|nk$ and $p_n\nmid nk$ which will give $ p_n\neq\omega({p_n})$. Thus by Lemma \ref{lemp}, we conclude that $\omega({p_n})\mid \big(p_{n}-\legendre{\Delta}{p_{n}}\big)$. 
Note that for all $p_n \nmid PQ \Delta$, except possibly finitely many primes. Hence by using Lemma \ref{lem1}, we obtain
\begin{equation*}
    U_{p_{n}-\legendre{\Delta}{p_n}}\not\equiv 0 \Mod { p_n^2}.
  \end{equation*}
 As $p_n|\Phi_{nk}(\alpha/\beta), \, p_n\nmid nk$ and by using Lemma \ref{rm}, we have $p_{n} \equiv \pm1 \Mod {nk}$. Hence for any $n\in S$, there is a prime $p_{n}$ satisfying
  \begin{align*}
  % \nonumber % Remove numbering (before each equation)
     U_{p_{n}-\legendre{\Delta}{p_{n}}}&\not\equiv 0 \Mod {p_{n}^2}, \\
    p_{n} &\equiv\pm 1 \Mod {nk}.
  \end{align*}
  From Lemma \ref{l8}, we conclude that each $p_n \, (n\in S)$ are distinct prime.
Thus we explore that,
\begin{align*}
  % \nonumber % Remove numbering (before each equation)
    \bigg|\bigg\{primes \, p\leq x\, : \,  p \equiv\pm 1 \Mod k, \,  
    U_{p-\legendre{\Delta}{p}}\not\equiv 0 \Mod {p^2}\bigg\}\bigg|
    &\geq \bigg|\bigg\{n\, :\, n\in S, \,|Q|^{\phi(nk)}|X^\prime_{nk}|\leq x\bigg\}\bigg|.
  \end{align*}
%Since $|X^\prime_{nr}|\leq |X_{nr}|\leq |U_{nr}|<2|\alpha|^{nr}$ and now obtain
Since $|Q|=|\alpha\beta|$, we shall write $|Q|^{\phi(nk)}<|\alpha|^{2nk}$ and also we have $|X^\prime_{nk}|\leq |X_{nk}|\leq |U_{nk}|\leq2|\alpha|^{nk}$. Therefore we obtain $|Q|^{\phi(nk)}|X^\prime_{nk}|<2|\alpha|^{3nk}$. We now get
\begin{align*}
% \nonumber % Remove numbering (before each equation)
  \bigg|\bigg\{n\, :\, n\in S, \, |Q|^{\phi(nk)} |X^\prime_{nk}|\leq x\bigg\}\bigg| &\geq  \bigg|\bigg\{n\, :\, n\in S, \,2|\alpha|^{3nk}\leq x\bigg\}\bigg| \\
   &= \bigg|\bigg\{n\, :\, n\in S, \, n\leq \frac{\log x/2}{3k\log |\alpha|} \bigg\}\bigg| \\
   &= \textstyle S\big(\frac{\log x/2}{3k \log |\alpha|}\big).
\end{align*}
Hence by Lemma \ref{l11},
%\begin{eqnarray*}
% \nonumber % Remove numbering (before each equation)
 % \#\{primes \, p\leq \, x\, : \,  p \equiv 1 \, (mod\, r)  \\
  %  U_{p-(\frac{\Delta}{p})}\not\equiv 0\, (mod \, p^2)\}&&\\
   %&\geq& \textstyle S\big(\frac{\log x/2}{r \log |\alpha|}\big) \\
   %&\gg& \log x/2\geq\frac{1}{2}\log x\gg \log x.
   %\end{eqnarray*}
\begin{align*}
% \nonumber % Remove numbering (before each equation)
  \bigg|\bigg\{primes \, p\leq \, x\, : \,  p \equiv\pm 1 \Mod k,\, 
    U_{p-\legendre{\Delta}{p}}\not\equiv 0 \Mod { p^2}\bigg\}\bigg|
   &\geq \textstyle S\big(\frac{\log x/2}{3k \log |\alpha|}\big) \\
   &\gg_{\alpha, k}\log x/2 \\
   &\gg_{\alpha,  k} \log x.
   \end{align*}
This completes the proof.
\section{Applications of Lucas sequences in Cryptography}
  Many branches of number theory frequently deal with Lucas sequences. As an application in Cryptography, the various studies related to public-key encryption schemes based on the Lucas sequences have been highlighted. In \cite{smith}, Smith et al. introduced the \textit{Public-key cryptosystem (LUC)} that is based on the Lucas sequences. Further, they widely discussed the cryptography properties of Lucas sequences and the cryptographic strength of LUC.

The major advantage of Lucas-based cryptosystems is that they are not formulated in terms of exponentiation \cite{bosma}. Then, Jiang et al. \cite{jiang} proposed a variant of (probabilistic) public-key encryption scheme based on Lucas sequences and also they analyzed the efficiency of the proposed schemes. Recently, a cryptosystem that is analogous to an elliptic curve cryptosystem has been developed by using Lucas sequences of the second kind \cite{sarbini}. By using the results in this paper, we suggest further investigation in this direction.
\section{Conclusions}
In this paper, we considered Lucas sequences which are more general sequences of Fibonacci and Lucas numbers. These special sequences are related to other research areas such as Chemistry and Cryptography.  We proved that under the assumption of $abc$ conjecture for number fields, there are at least $O(\log x)$ as many Lucas non-Wieferich primes p such that $p\equiv\pm1 \Mod{k}$ for any fixed integer $k\geq2$. 

Our results will lead to further investigations in the field of Cryptography. Especially, in public-key cryptography or asymmetric cryptography. It will be more useful in engineering and technological applications.
 
\medskip
\noindent
\textbf{Acknowledgment.}
The author I. Mumtaj Fathima would like to express her gratitude to Maulana Azad National Fellowship for minority students, UGC. This research work is supported by MANF-2015-17-TAM-56982, University Grants Commission (UGC), Government of India.

%\pagebreak


\begin{thebibliography}{SS}
\bibitem{bat}R. Battaloglu, Y. Simsek, \textit{On new formulas of Fibonacci and Lucas numbers involving Golden ratio associated with atomic structure in chemistry,} Symmetry. , \textbf{13} (2021), no.8, 1334. \url{https://doi.org/10.3390/sym13081334 }
\bibitem{bilu}Yu. Bilu, G. Hanrot, P. M. Voutier (with an appendix by M. Mignotte), \textit{Existence of primitive divisors of Lucas and Lehmer numbers,} J. Reine Angew. Math. , \textbf{539} (2001), 75--122. \url{https://doi.org/10.1515/crll.2001.080}
\bibitem{bosma}D. Bleichenbacher, W. Bosma, A. K. Lenstra, \textit{Some remarks on Lucas-based cryptosystems,} In: D. Coppersmith (eds) Advances in Cryptology-CRPTO' 95. CRYPTO 1995, Lecture Notes in Computer Science, \textbf{963}, Springer, Berlin, Heidelberg, 1995. \url{https://doi.org/10.1007/3-540-44750-4_31}
%\bibitem{car}R. D. Carmichael, \textit{On the numerical factors of the arithmetic forms $\alpha^n\pm\beta^n,$} Ann. of Math. , \textbf{15}(2) (1913), 30--70.
\bibitem{chending}Y.-G. Chen, Y. Ding, \textit{Non-Wieferich primes in arithmetic progressions,} Proc. Amer. Math. Soc. , \textbf{145} (2017), no. 5, 1833--1836.  \url{https://doi.org/10.1090/proc/13201} 
\bibitem{ding}Y. Ding, \textit{Non-Wieferich primes under the $abc$ conjecture,} C. R. Math. Acad. Sci. Paris, \textbf{357} (2019),  no. 6, 483-- 486. \url{https://doi.org/10.1016/j.crma.2019.05.007}
\bibitem{graves}H. Graves, M. Ram Murty, \textit{The $abc$ conjecture and non-Wieferich primes in arithmetic progressions,} J. Number Theory. , \textbf{133} (2013), no. 6, 1809--1813. \url{https://doi.org/10.1016/j.jnt.2012.10.012}
 \bibitem{gyo}K. Gy\H{o}ry, \textit{On the $abc$ conjecture in algebraic number fields,} Acta Arith. , \textbf{133} (2008), no. 3, 281--295. \url{https://doi.org/10.4064/aa133-3-6}
 \bibitem{jiang} Z. Jiang, M. Xiang, Y. Wang, \textit{A research on new public-key encryption schmes,} Applied Mathematics and Computation. , \textbf{169} (2005), no.1, 51--61. \url{https://doi.org/10.1016/j.amc.2004.10.038}
 \bibitem{koshy}T. Koshy, \textit{Fibonacci and Lucas numbers with applications,} John Wiley \& Sons, Inc, New York, 2001. \url{http://dx.doi.org/10.1002/9781118033067 }
 \bibitem{lehmer}D. H. Lehmer, \textit{An extended theory of Lucas' functions,} Ann. Math. , \textbf{31} (1930), no. 3, 419--448. \url{https://doi.org/10.2307/1968235}
 \bibitem{lucas}E. Lucas, \textit{Th\'{e}orie des fonctions num\'{e}riques simplement p\'{e}riodiques}, Amer. J. Math. , \textbf{1} (1878), no. 4, 289--321. \url{https://doi.org/10.2307/2369373}
\bibitem{mci}R. J. McIntosh, E. L. Roettger, \textit{A search for Fibonacci-Wieferich and Wolstenholme primes,} Math. Comp., \textbf{76} (2007), no. 260, 2087--2094. \url{https://doi.org/10.1090/S0025-5718-07-01955-2}
%\bibitem{primegrid}PrimeGrid Project, http://www.primegrid.com/.
\bibitem{ram}M. Ram Murty, \textit{Problems in analytic number theory.} 2nd ed. , Grad. Texts in Math. 206, Springer, New York, 2008. \url{https://doi.org/10.1007/978-0-387-72350-1}
\bibitem{ribbook}P. Ribenboim, \textit{The new book of prime number records,} Springer-Verlag, New York, 1996. \url{https://doi.org/10.1007/978-1-4612-0759-7}
\bibitem{riben}P. Ribenboim, \textit{On square factors of terms of binary recurring sequences and the $ABC$ conjecture,} Publ. Math. Debrecen, \textbf{59} (2001), no. 3--4, 459--469. \url{https://publi.math.unideb.hu/load_pdf.php?p=752}
\bibitem{rout1}S. S. Rout, \textit{Balancing non-Wieferich primes in arithmetic progression and $abc$ conjecture,} Proc. Japan Acad. Ser. A Math. Sci. , \textbf{92} (2016), no. 9, 112--116. \url{https://doi.org/10.3792/pjaa.92.112}
\bibitem{rout2}S. S. Rout, \textit{Lucas non-Wieferich primes in arithmetic progressions,} Funct. Approx. Comment. Math. , \text{60} (2019), no. 2, 167--175. \url{https://doi.org/10.7169/facm/1709}
\bibitem{sarbini}I. N. Sarbini, L. F. Koo, T. J. Wong, F. H. Naning, P. H. Yiu, \textit{An analysis for chosen plaintex attack in elliptic curve cryptosystem based on second order Lucas sequence,} International Journal of Scientific \& Technology Research, \textbf{8} (2019), no. 11, 1193--1196. \url{http://psasir.upm.edu.my/id/eprint/80577/1/CURVE.pdf}
\bibitem{sman}J. H. Silverman, \textit{Wieferich's criterion and the abc-conjecture,} J. Number Theory. , \textbf{30} (1988), no. 2, 226--237. \url{https://doi.org/10.1016/0022-314X(88)90019-4}
\bibitem{smith} P. J. Smith, M. J. J. Lennon, \textit{LUC: A new public key system}, In Proceedings of the Ninth IFIP Int. Symp. On Computer security, (1993), 103--117.
\bibitem{stewart} C. L. Stewart, On divisors of Fermat, Fibonacci, Lucas and Lehmer numbers, III, J. London Math. Soc. , \textbf{28}
(1983), no. 2, 211--217. \url{https://doi.org/10.1112/jlms/s2-28.2.211}
%\bibitem{somer}L. Somer, \textit{Divisibility of terms in Lucas sequences by their subscripts. Applications of Fibonacci numbers,} Vol. 5 (St. Andrews, 1992), 515-525, Kluwer Acd.Publ. , Dordrecht, 1993.
%\bibitem{ste}C. L Stewart, \textit{On divisors of Fermat, Fibonacci, Lucas  and Lehmer numbers,} Proc. Londan Math. Soc. , \textbf{35} (1977), 425--447.
 \bibitem{vojta}P. Vojta, \textit{Diophantine approximations and value distribution theory.} Lecture notes in mathematics, 1239, Springer, Berlin, 1987. \url{https://doi.org/10.1007/BFb0072989}
 \bibitem{wang}Y. Wang, Y. Ding, \textit{A note on balancing non-Wieferich primes,} J. Anhui Norm. Univ. Nat. Sci. , \textbf{43} (2020), no. 2, 129--133. \url{DOI:10.14182/J.cnki.1001-2443.2020.02.004 }
 \bibitem{mwang} M. Wang, P. Yang, Y. Yang, \textit{Carlitz's equations on generalized Fibonacci numbers,} Symmetry. , \textbf{14} (2022), no. 4, 764. \url{https://doi.org/10.3390/sym14040764 }
\bibitem{wief}A. Wieferich, \textit{Zum letzten Fermat'schen Theorem (German),} J. Reine Angew. Math. , \textbf{136} (1909), 293--302.
\url{https://doi.org/10.1515/crll.1909.136.293}
\bibitem{wlo}J. Wlodarski, \textit{The Golden ratio and the Fibonacci numbers in the world of atoms,} Fibonacci Q. , \textbf{1} (1963), no. 4, 61--63.

\end{thebibliography}
\end{document}